\DeclareMathOperator{\dist}{dist}
\newcommand{\bndry}{b}
\newcommand{\CintD}{\mathbf C_{\domain}}
\newcommand{\domain}{D}
\newcommand{\realdomain}{\domain}
\newcommand{\dee}{\partial}
\newcommand{\deebar}{\overline\dee}
\newcommand{\neu}{\mathfrak{n}}
\newcommand{\reu}{\mathfrak{r}}
\newcommand{\Hs}{\mathcal H}
\newcommand{\Hsa}{\mathcal H_\alpha}
\newcommand{\n}{\nu}
\newcommand{\Bzj}{\tilde{b}_{z}}
\long\def\symbolfootnote[#1]#2{\begingroup
\def\thefootnote{\fnsymbol{footnote}}\footnote[#1]{#2}\endgroup}
\newtheorem{thm}{Theorem}[section]
\newtheorem{prop}[thm]{Proposition}
\newtheorem{lem}[thm]{Lemma}
\newtheorem{cor}[thm]{Corollary}
\theoremstyle{definition}
\newtheorem{defn}[thm]{Definition}
\newtheorem{example}[thm]{Example}
\theoremstyle{remark}
\newtheorem{rem}[thm]{Remark}
\title{Boundary value problems
for holomorphic functions\\
on Lipschitz planar domains}
\author{William Gryc, Loredana Lanzani\footnote{supported in part by the National Science Foundation, award no. DMS-1901978, and a Simons Foundation Travel Support for Mathematicians, award no. 919763.}, Jue Xiong, Yuan Zhang}
\renewcommand{\thefootnote}{\fnsymbol{footnote}} 
\date{}
\begin{document}
\maketitle
\begin{abstract}
 We study the $\bar\partial $ equation subject to various boundary value conditions on bounded simply connected Lipschitz domains  $\domain\subset\mathbb C$: 
 for the Dirichlet problem with datum in $L^p(\bndry\domain, \sigma)$, this is simply a restatement of the fact that members of the holomorphic Hardy spaces are uniquely and completely determined by their boundary values. Here we identify the maximal data spaces and obtain estimates in the maximal $p$-range for the Dirichlet, Regularity-for-Dirichlet, Neumann, and Robin boundary conditions for 
 $\bar\partial$.

\end{abstract}

\section{Introduction}

Let $\domain$ be a
bounded,
rectifiable,
simply connected 
domain in $\mathbb C$ whose boundary is endowed with the induced Lebesgue measure $\sigma$.
We denote by
$\mathcal E^p(\domain)$ the Smirnov class
$$\displaystyle{ \mathcal E^p(\domain): =\left\{F\in\vartheta(\domain): \|F\|_{\mathcal E^p(\domain)}^p :=\sup_{j\in\mathbb{N}}\,\int\limits_{\bndry\domain_j} |F(\zeta)|^p d\sigma_j(\zeta) <\infty \right\},\quad 0<p< \infty },$$
(with the standard modification for $p=\infty$) where $\vartheta(\domain)$ is the 
set of holomorphic functions on $\domain$, and $\{\domain_j\}_{j\in\mathbb{N}}$ is (any) exhaustion of $\domain$ by rectifiable subdomains.  Each $F\in \mathcal E^p(\domain)$ has a non-tangential boundary value $\dot F (\zeta)$ at $\sigma$-a.e. $\zeta\in\bndry\domain$,
 see \eqref{D:Fdot}.  As is well known, such boundary values 
determine a proper subspace 
of the Lebesgue space $L^p(\bndry\domain, \sigma)$, see \cite[Theorem 10.3]{Duren}.  The following congruence of Banach spaces (that is, set identity and norm equivalence) was proved by D. Jerison and C. Kenig \cite{JK} whenever $D$ is a chord-arc domain:
\begin{equation}\label{E:JK}
\displaystyle{ \mathcal E^p(\domain) = \Hs^p(\domain),\quad 0<p\leq\infty.}
\end{equation}
Here
  $\Hs^p(\domain)$ is the holomorphic Hardy space:
\begin{equation}\label{D:Hp}
\Hs^p(\domain):= \{F\in\vartheta (D):\, F^*\in L^p(\bndry D, \sigma)\}\ \ \text{with norm}\ \ 
\|F\|_{\Hs^p(\domain)}:= \|\dot F\|_{L^p(\bndry\domain, \sigma)}
\end{equation}
where $F^*$ denotes the non-tangential maximal function of $F$, see \eqref{D:Fstar}.
In view of the congruence \eqref{E:JK}, we henceforth adopt the notation $\Hs^p(\domain)$ and $h^p(\bndry\domain)$, respectively,  for the interior and boundary values of such spaces; hence
\begin{equation}\label{D:hp}
h^p(\bndry\domain) := \left\{\dot F\ :\ F\in \Hs^p(\domain)\right\}\, \subsetneq\, L^p(\bndry\domain, \sigma).
\end{equation}
One way to understand the relationship between $\Hs^p(\domain)$ and $h^p(\bndry\domain)$ 
  is to reframe both spaces within the context of a {\bf Dirichlet boundary value problem for
  $\bar\partial$}
  with datum in
    $L^p(\bndry\domain, \sigma)$, namely
\begin{equation}\label{E:D-Dbar}
     \left\{
      \begin{array}{lcll}
      \bar\partial F(z) &= &0 & 
            \ z\in \domain;\\ \\
      \dot F(\zeta) &= &f(\zeta) & 
          \ \sigma\text{-a.e.}\ \zeta\in \bndry\domain;\\ \\
      F^*&\in& L^p(\bndry\domain, \sigma).
      \end{array}
\right.
\qquad \text{where we are given}\quad f\in L^p(\bndry\domain, \sigma)
\end{equation}
 This problem is uniquely solvable if and only if $f$ belongs to $h^p(\bndry\domain)$:
 in such case, the solution $F$ must lie in $\Hs^p(\domain)$.
  Put another way, 
we have that $h^p(\bndry\domain)$
  is the maximal data set for the problem \eqref{E:D-Dbar}, and
$\Hs^p(\domain)$
is the solution space. See also \cite{Begher} for a related result in the case when $\domain=\mathbb{D}$ (the unit disk).

 If $\domain$  is a  Lipschitz domain (that is, the
 topological boundary $\bndry\domain$ is locally the graph of a Lipschitz  function, see Definition \ref{de}), the above fact is stated more precisely as
 \begin{thm}\label{T:D-Dbar}
 Let $\domain\subset \mathbb C$ be a bounded simply connected Lipschitz domain  and let $p>0$.
 Given  $f\in L^p(\bndry\domain, \sigma)$, we have that
 \eqref{E:D-Dbar}
   is solvable if and only if $f\in h^p(\bndry\domain)$, in which case \eqref{E:D-Dbar} has a unique solution and it lies in $\Hs^p(\domain)$.
  Furthermore,
\begin{itemize}
\item if  $p\geq 1$ and $f\in h^p(\bndry\domain)$, then the solution $F$ admits the representation 
$$F(z) = \mathbf C_\domain f (z),\quad z\in \domain$$
where $\mathbf C_\domain f$ is the Cauchy integral for $\domain$ acting on $f$;
\item if $1<p<\infty$ and $f\in h^p(\bndry\domain)$, we also have that \begin{equation}\label{E:Dpbound}
\|F^*\|_{L^p(\bndry\domain, \sigma)}\lesssim
 \|f\|_{L^p(\bndry\domain, \sigma)}
 \end{equation}
and the implied constant depends only on $\domain$ and $p$.
\end{itemize}
\end{thm}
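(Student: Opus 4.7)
The plan is to combine the Jerison--Kenig congruence \eqref{E:JK} (available because Lipschitz domains are chord-arc) with the $L^p$-theory of the Cauchy integral on Lipschitz curves due to Coifman, McIntosh, and Meyer. Solvability and necessity are essentially a reformulation of the definitions: by \eqref{D:hp}, $f\in h^p(\bndry\domain)$ is equivalent to the existence of some $F\in \Hs^p(\domain)$ with $\dot F=f$, which then solves \eqref{E:D-Dbar}; conversely, any solution of \eqref{E:D-Dbar} lies in $\Hs^p(\domain)$ by the third line of that system, so $f=\dot F$ must lie in $h^p(\bndry\domain)$. For uniqueness, let $G$ denote the difference of two candidate solutions: $G\in \Hs^p(\domain)$ with $\dot G=0$, and \eqref{E:JK} upgrades this to $G\in\mathcal{E}^p(\domain)$. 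Since a Lipschitz domain is a Smirnov domain, the inner--outer factorization of Smirnov class functions forces $G\equiv 0$.

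For the Cauchy representation in the range $p\geq 1$, I would invoke the classical Cauchy formula on an exhausting sequence $\{\domain_j\}$ of rectifiable subdomains of $\domain$ and pass to the limit. For fixed $z\in\domain$ and $j$ large enough that $z\in\domain_j$, the kernel $(\zeta-z)^{-1}$ is uniformly bounded on $\bndry\domain_j$, while the defining $\mathcal{E}^p$-control combined with $p\geq 1$ and Jerison--Kenig yields $L^1$-convergence of the boundary traces $F|_{\bndry\domain_j}$ to $\dot F$; letting $j\to\infty$ identifies $F(z)=\mathbf{C}_\domain f(z)$ at every $z\in\domain$.

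For the $L^p$ estimate in the range $1<p<\infty$, the crucial ingredient is the Coifman--McIntosh--Meyer theorem: the principal-value Cauchy singular integral is bounded on $L^p(\bndry\domain,\sigma)$ for every Lipschitz curve. After a standard maximal-function argument on Lipschitz domains, this is known to imply the non-tangential maximal estimate
\[
\|(\mathbf{C}_\domain f)^*\|_{L^p(\bndry\domain,\sigma)} \lesssim \|f\|_{L^p(\bndry\domain,\sigma)},
\]
with implied constant depending only on $\domain$ and $p$. Combined with the representation $F=\mathbf{C}_\domain f$ from the previous step, this gives \eqref{E:Dpbound}.

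The main obstacle I anticipate is the uniqueness step across the full range $p>0$. One needs an $\mathcal{E}^p(\domain)$ function with vanishing non-tangential boundary values to vanish identically for every $p>0$, including the quasi-Banach regime $0<p<1$ where standard $L^p$-duality tools are unavailable. This requires the full inner--outer factorization of $\mathcal{E}^p$, which holds precisely when $\domain$ is a Smirnov domain. The cleanest route is to pull back by a Riemann map $\varphi:\mathbb{D}\to\domain$, use that $\varphi'$ is outer in $H^1(\mathbb{D})$ (the Smirnov condition, guaranteed by the chord-arc hypothesis), and apply the classical uniqueness for $H^p(\mathbb{D})$ on the disk---transferring the conclusion back to $\domain$ via the standard correspondence $F\mapsto (F\circ\varphi)(\varphi')^{1/p}$.
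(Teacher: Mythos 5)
Your proposal is correct, but it reaches the key estimate by a genuinely different route than the paper. For solvability/necessity you argue exactly as the paper does (unwinding the definitions of $\Hs^p$ and $h^p$). For uniqueness the paper simply cites \cite[Theorem 10.3]{Duren}, which holds for any rectifiable Jordan domain; your detour through the Smirnov property and inner--outer factorization is valid (Lipschitz domains are chord-arc, hence Smirnov) but is more than is needed -- the pullback $F\mapsto (F\circ\varphi)(\varphi')^{1/p}\in H^p(\mathbb D)$ together with the F.~and M.~Riesz theorem already forces $F\equiv 0$ from a.e.\ vanishing boundary values, with no outer-function input. For the representation $F=\mathbf C_\domain f$ the paper cites the Cauchy formula for $\mathcal E^1$ \cite[Theorem 10.4]{Duren}; your exhaustion argument is essentially the paper's own technique in Lemma \ref{L:CauhyThmHp} (dominated convergence with dominating function $F^*\in L^1$ along a Ne\v{c}as exhaustion is the cleanest way to justify the limit you assert), so this is a fillable detail rather than a gap. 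The real divergence is \eqref{E:Dpbound}: you obtain it from the Coifman--McIntosh--Meyer theorem plus the standard nontangential maximal estimate for the Cauchy integral on Lipschitz curves, applied to the representation $F=\mathbf C_\domain f$; the paper instead proves the two-sided comparison $\|F^*\|_p\approx\|\dot F\|_p$ (Lemma \ref{LaplacianLemma}) by solving harmonic boundary value problems -- the Dirichlet problem for $2-\delta<p<\infty$ and, for $1<p<2$, the Neumann problem applied to a holomorphic antiderivative, with Lemma \ref{L:CauhyThmHp} supplying the compatibility condition. Your route is shorter and self-contained at the level of singular-integral theory; the paper's route yields the equivalence (not just the one-sided bound), which it reuses later (e.g.\ in Corollary \ref{HSsubsetHardyC} and the Robin problem), and makes transparent why the full range $1<p<\infty$ survives despite the restricted ranges for the Laplace problems.
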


\vskip0.1in
Readers familiar with the analogous result
for the Laplace operator
might be surprised to see that conclusion \eqref{E:Dpbound} in Theorem \ref{T:D-Dbar} holds for $p$ in the full range: $1<p<\infty$ rather than
 $2-\delta <p<\infty$, see \eqref{E:DL} and  Theorem \ref{T:DL} below. This reflects the fact that
  the effective
   data set for \eqref{E:D-Dbar}, namely
$h^p(\bndry\domain)$, is a very small portion
of $L^p(\bndry D, \sigma)$
(the data set for \eqref{E:DL}).
It follows from Theorem \ref{T:D-Dbar} that if the 
 boundary data of the Dirichlet problem for Laplace operator \eqref{E:DL}
 is taken in
$h^p(\bndry D, \sigma)$,  
then the existence of a unique solution (with estimates) is guaranteed for any $1<p<\infty$. Analogous statements are true for all the
 boundary conditions considered below.
\vskip0.1in

 Here we
seek to determine the
maximal data sets for
 the \textbf{Regularity; Neumann, and Robin problems} for $\deebar$ with data in
 Lebesgue or Sobolev spaces. Namely,
\vskip0.1in
\begin{equation}
\label{E:D-Dbar-reg}
 \left\{
      \begin{array}{lcll}
      \bar\partial F(z) &= &0 & 
            \ z\in \domain;\\ \\
      \dot F(\zeta) &= &f(\zeta) & 
           \ \sigma\text{-a.e.}\ \zeta\in \bndry\domain;\\ \\
      (F')^*&\in& L^p(\bndry\domain, \sigma)
      \end{array}
\right.
\end{equation}
given $f \in W^{1, p}(\bndry\domain, \sigma)$. Also

\begin{equation}\label{NdbarIntro}
     \left\{
      \begin{array}{lcll}
      \bar\partial G &= &0 & \text{in} \ \ \domain;\\ \\
       \displaystyle{\frac{\partial G}{\partial n}(\zeta)} &= &g(\zeta) & \text{for}\ \sigma\text{-a.e.}\ \zeta\in \bndry\domain\,
       \\ \\
      (G')^*&\in& L^p(\bndry\domain, \sigma)
      \end{array}
\right.
\end{equation}
given $g\in L^p(\bndry\domain, \sigma)$ subject to the compatibility condition: 
$\displaystyle{\int\limits_{\bndry\domain}g\, d\sigma =0}$, and
\vskip0.2in
\begin{equation}\label{RdbarIntro}
     \left\{
      \begin{array}{lcll}
      \bar\partial G &= &0 & \text{in} \ \ \domain;\\ \\
       \displaystyle{\frac{\partial G}{\partial n}(\zeta) +b(\zeta) \dot G(\zeta)}&= & r(\zeta)
        & \text{for}\ \sigma\text{-a.e.}\ \zeta\in \bndry\domain
        \\ \\ (G')^*&\in& L^p(\bndry\domain, \sigma)\, 
      \end{array}
\right.
\end{equation}
\vskip0.12in

\noindent given $b$ and $r$ in $L^p(\bndry\domain, \sigma)$, and with the
Robin coefficient $b$ subject
to compatibility conditions that will be specified below. 
  In both problems,  $G'$ denotes the complex derivative of $G$:
$$
G'(z) = \frac{\dee G}{\dee z}(z),\quad z\in D
$$
whereas
\begin{equation}\label{E:normal-deriv-hol}
\frac{\dee G}{\dee n}(\zeta):=\ -i T(\zeta) \dot{(G')}(\zeta) \ =\ \lim_{\stackrel{z\to \zeta}{ z\in \Gamma (\zeta)}}\langle \nabla G (z), n(\zeta)\rangle_{\mathbb R}\quad \sigma-\text{a.e.}\ \zeta \in\bndry\domain. 
\end{equation}
Here $\Gamma(\zeta)$ is the coordinate (or regular) cone at $\zeta\in b\domain$, see Definition \ref{D:ap}, $T(\zeta):= t_1(\zeta) + i t_2(\zeta)$ is the positively oriented, complex tangent vector at $\zeta\in\bndry\domain$, and $n(\zeta) := -iT(\zeta)$ is the complex outer unit normal vector which, depending on context,
we may interpret as the
vector $(t_2(\zeta), -t_1(\zeta))\in\mathbb R^2$.
\vskip0.1in

Our candidate solution space for all of these boundary conditions
  is the {\bf holomorphic Sobolev-Hardy space}
     \begin{equation}\label{D:H1q}
    \Hs^{1, p}(\domain): = \{G\in \mathcal \vartheta(\domain): G'\in \Hs^p(\domain)\},\quad p>0\, .
\end{equation}
It turns out that $ \Hs^{1, p}(\domain)$ is a Banach space for any $p\geq 1$ 
 in which case
 we have that
\begin{equation}\label{E:inclusion}
 \Hs^{1, p}(\domain)\, \subseteq \, \Hs^p(\domain)
\end{equation}
moreover for $1<p<\infty$ the set inclusion \eqref{E:inclusion} is also an embedding of Banach spaces, see Corollary \ref{HSsubsetHardyC} and Lemma \ref{4i}.
Hence for any $p\geq 1$ the members of  $\Hs^{1, p}(\domain)$ possess non-tangential boundary values which are
 in $L^p(\bndry\domain, \sigma)$. We write
\begin{equation}\label{D:h1q}
h^{1, p}(\bndry\domain): = \left\{\dot G: \ G\in \mathcal H^{1, p}(\domain)\right\}\,,\quad 1\leq p\leq\infty
\end{equation}
 and we show that $h^{1,p}(\bndry\domain) \subset W^{1, p}(\bndry\domain, \sigma)$, the Sobolev space for $\bndry\domain$, see Proposition \ref{d}. 
 Thus any
  $h\in h^{1, p}(\bndry\domain)$ possesses a tangential derivative $\dee_T h$ $\sigma$-a.e. $\bndry\domain$.

  \vskip0.1in
  Our first main result states that  $h^{1, p}(\bndry\domain)$ is the optimal data space for the Regularity problem for $\deebar$. That is,
  \vskip0.1in
  
  \begin{thm}\label{T:D-Dbar-reg}
 Let $\domain$ be a bounded simply connected Lipschitz domain  and let $p\geq 1$.
  Given  $f\in W^{1, p}(\bndry\domain, \sigma)$, we have that   \eqref{E:D-Dbar-reg}
  is solvable if and only if $f\in h^{1,p}(\bndry\domain)$, in which case \eqref{E:D-Dbar-reg} has a unique solution $F$ and it lies in $\Hs^{1,p}(\domain)$.
If $1<p<\infty$ and $f\in h^{1,p}(\bndry\domain)$ we also have that \begin{equation*}\label{E:Dpbound-reg}
\|F^*\|_{L^\infty(\bndry\domain, \sigma)} +
\|(F')^*\|_{L^p(\bndry\domain, \sigma)} 
\lesssim
 \|f\|_{W^{1,p}(\bndry\domain, \sigma)}
 \end{equation*}
and the implied constant depends only on $\domain$ and $p$.
\end{thm}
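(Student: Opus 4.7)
The ``if'' direction is immediate from the definition of $h^{1,p}(\bndry\domain)$: given $f$ in this space, the element $G\in\Hs^{1,p}(\domain)$ with $\dot G=f$ provided by \eqref{D:h1q} is itself a solution of \eqref{E:D-Dbar-reg}, since $G'\in\Hs^p(\domain)$ supplies the condition $(G')^*\in L^p(\bndry\domain,\sigma)$. Conversely, any solution $F$ of \eqref{E:D-Dbar-reg} satisfies $F'\in\Hs^p(\domain)$ by the definition of $\Hs^p$, whence $F\in\Hs^{1,p}(\domain)$ and $f=\dot F\in h^{1,p}(\bndry\domain)$. For uniqueness, the difference of any two solutions is a holomorphic function $F$ lying in $\Hs^{1,p}(\domain)\subseteq\Hs^p(\domain)$ (by \eqref{E:inclusion}) with $\dot F\equiv 0$, so the uniqueness portion of Theorem~\ref{T:D-Dbar} forces $F\equiv 0$.

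\textbf{The $(F')^*$ bound.} Assume $1<p<\infty$. Since $F'$ is holomorphic and in $\Hs^p(\domain)$, applying \eqref{E:Dpbound} of Theorem~\ref{T:D-Dbar} to $F'$ gives
$$\|(F')^*\|_{L^p(\bndry\domain,\sigma)}\;\lesssim\;\|\dot{(F')}\|_{L^p(\bndry\domain,\sigma)}.$$
To pass from $\|\dot{(F')}\|_{L^p}$ to $\|\dee_T f\|_{L^p}$ I will establish the pointwise identity $\dot{(F')}(\zeta)=\overline{T(\zeta)}\,\dee_T f(\zeta)$ for $\sigma$-a.e.\ $\zeta\in\bndry\domain$: along any approximating smooth curve in $\domain$ the chain rule gives $\dee_s(F\circ\gamma)=F'(\gamma)\,T(\gamma)$ (the $\dee/\dee\bar z$ term vanishes by holomorphicity), and taking a nontangential limit pairs this with the tangential derivative of $\dot F=f$, whose $\sigma$-a.e.\ existence is assured by Proposition~\ref{d}. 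Since $|T|=1$, I conclude $\|(F')^*\|_{L^p}\lesssim\|\dee_T f\|_{L^p}\leq\|f\|_{W^{1,p}(\bndry\domain,\sigma)}$.

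\textbf{The $F^*$ bound.} Because $\bndry\domain$ is a one-dimensional Lipschitz curve, the Morrey--Sobolev embedding $W^{1,p}(\bndry\domain,\sigma)\hookrightarrow L^\infty(\bndry\domain,\sigma)$ holds for all $p>1$, giving $\|f\|_{L^\infty}\lesssim\|f\|_{W^{1,p}}$. Since $F\in\Hs^p(\domain)$ admits bounded nontangential boundary values $f$, the Hardy-space maximum principle (via subharmonicity of $|F|^q$) yields $\sup_\domain|F|\leq\|f\|_{L^\infty}$; in particular $\|F^*\|_{L^\infty(\bndry\domain,\sigma)}\lesssim\|f\|_{W^{1,p}(\bndry\domain,\sigma)}$, completing the estimate.

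\textbf{Main obstacle.} The one substantive technical point is the boundary identity $\dot{(F')}=\overline{T}\,\dee_T f$: it requires commuting nontangential convergence of $F'$ with tangential differentiation of $\dot F$ on a merely Lipschitz curve. I expect to obtain it by combining the absolute continuity of $F\circ\gamma$ along $\bndry\domain$ (derived from $F'\in\Hs^1(\domain)$ via a Fubini argument over nontangential cones) with the $\sigma$-a.e.\ nontangential existence of boundary values for $F'$. Everything else reduces to invocations of Theorem~\ref{T:D-Dbar}, Proposition~\ref{d}, the inclusion \eqref{E:inclusion}, and standard Sobolev and Hardy-space facts.
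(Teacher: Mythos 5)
Your solvability characterization, uniqueness argument, and the bound on $(F')^*$ run essentially parallel to the paper's proof: the paper likewise observes that $F'$ solves the Dirichlet problem \eqref{E:D-Dbar} with datum $\overline{T}\,\dee_T f\in h^p(\bndry\domain)$ and invokes Theorem \ref{T:D-Dbar}. The identity you flag as the ``main obstacle,'' $\dot{(F')}=\overline{T}\,\dee_T \dot F$, does not need to be re-derived: it is precisely \eqref{E:tan-der} in Proposition \ref{d}(a), which the paper proves via the Ne\v{c}as exhaustion by essentially the chain-rule-plus-dominated-convergence argument you sketch, so you may simply cite it.

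Where you genuinely diverge is the $L^\infty$ bound for $F^*$. The paper writes $F=\mathbf C_\domain f$, combines the embedding $W^{1,p}(\bndry\domain,\sigma)\subset C^{1-\frac1p}(\bndry\domain)$ (Remark \ref{mor}) with McLean's theorem that $\mathbf C_\domain$ maps $C^{1-\frac1p}(\bndry\domain)$ boundedly into $C^{1-\frac1p}(\overline\domain)$, and thereby gets the stronger conclusion $F\in C^{1-\frac1p}(\overline\domain)$ with norm control. You instead use $W^{1,p}\hookrightarrow L^\infty$ on the boundary together with a ``Hardy-space maximum principle.'' The conclusion $\sup_\domain|F|\le\|\dot F\|_{L^\infty}$ is correct on a Lipschitz domain, but your justification ``via subharmonicity of $|F|^q$'' is not by itself a proof: on a general rectifiable Jordan domain this maximum principle can fail (Keldysh--Lavrentiev non-Smirnov examples yield functions of Smirnov class $\mathcal E^1$ with unimodular boundary values that are unbounded), so subharmonicity must be supplemented by the fact that the candidate harmonic majorant really dominates $|F|^q$. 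What makes your step legitimate here is that Lipschitz (indeed chord-arc) domains are Smirnov domains, so after invoking the identification \eqref{E:JK} you can apply the $E^p$-to-$E^\infty$ upgrade (e.g. Duren, Theorem 10.6); you should say this explicitly. Alternatively, you can bypass Smirnov theory entirely along the paper's lines: your derivative estimate plus Proposition \ref{d}(b) gives $f\in C^{1-\frac1p}(\bndry\domain)$ with $\|f\|_{C^{1-\frac1p}(\bndry\domain)}\lesssim\|f\|_{W^{1,p}(\bndry\domain,\sigma)}$, and a direct size estimate of $\mathbf C_\domain f$ for H\"older data (or the citation of McLean) then bounds $\sup_\domain|F|$ as required.
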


 We next consider the following two subspaces of $L^p(\bndry\domain, \sigma)$: 
\begin{equation}\label{D:Ndata}
\neu^{p}(\bndry\domain):= \left\{ g:= -i\dee_T h: \ h\in h^{1, p}(\bndry\domain)\right\},\quad 1\leq p\leq\infty
\end{equation}
and
 \begin{equation}\label{D:Rdata}
\reu^p_b(\bndry\domain): =\left \{ r:= -i\dee_T h + bh
: h\in h^{1,p}(\bndry\domain) \right \}\, \quad 1\leq p\leq \infty.
\end{equation}
That is 
\begin{equation*}\label{D:Ndata-a}
\neu^{p}(\bndry\domain):= \left\{ g:= -i\dee_T \dot{G}: \ G\in  \mathcal H^{1, p}(\domain)\right\},\quad 1\leq p\leq\infty
\end{equation*}
and
 \begin{equation*}\label{D:Rdata-a}
\reu^p_b(\bndry\domain): =\left \{ r:= -i\dee_T \dot{H} + b\dot{H}
: H\in \Hs^{1,p}(\bndry\domain) \right \}\, \quad 1\leq p\leq \infty.
\end{equation*}

\vskip0.1in

Our second main result gives  that $\neu^p(\bndry\domain)  $ is the maximal data space for the Neumann  problem for the $\deebar$ operator. Specifically,

\begin{thm}\label{T:NDbar}
Let $\domain\subset\mathbb C$ be a bounded simply connected Lipschitz domain and let 
$p\geq 1$.
Given $g\in L^p(\bndry\domain, \sigma)$ we have that
 \eqref{NdbarIntro}
 is solvable  if and only if  $g\in\neu^{p}(\bndry\domain)$, in which case 
all  solutions are in
 $\Hs^{1,p}(\domain)$ and
differ by an additive  constant.
 Furthermore,
\begin{itemize}
\item 
the complex derivative of each solution $G$ admits the representation
$$G'(z) = \CintD (i\overline{T}g)(z), \quad z\in\domain.$$
 Here 
 $\CintD (i\overline{T}g)$ is the Cauchy integral for $\domain$ acting on the pointwise product 
 $$i\,\overline{T}(\zeta) g(\zeta), \quad \sigma\text{-a.e.}\ \zeta\in\bndry\domain$$
 where $T(\zeta)$ is the unit tangent vector at $\zeta$;
\item 
If $1<p<\infty$ and $g\in \neu^p(\bndry\domain)$ we have
\begin{equation*}\label{E:Nqbound}
\|(G')^*\|_{L^p(\bndry\domain, \sigma)}\lesssim
 \|g\|_{L^p(\bndry\domain, \sigma)}
 \end{equation*}
 for each solution $G$. The implied constant depends only on $\domain$ and $p$;
\item 

If $1<p<\infty$ and $g\in \neu^p(\bndry\domain)$, then for any $\alpha\in D$ there is a unique solution $G_\alpha$ in
 \begin{equation*}\label{E:H1q-alpha}
 \Hsa^{1,p}(\domain):= \{G\in \Hs^{1, p}(\domain):\  G(\alpha) =0 \}
 \end{equation*}
  and $G_\alpha\in C^{1-\frac{1}{p} }(\overline{\domain})$ with
 \begin{equation*}\label{E:Nqbound-alpha}
\|G_\alpha^*\|_{C^{1-\frac{1}{p} }(\overline{\domain})}
\lesssim
 \|g\|_{L^p(\bndry\domain, \sigma)}.
 \end{equation*}
 Moreover, $G_\alpha$ admits the representation
 \begin{equation}\label{E:temp11}
 G_\alpha(z)  =\mathbf C_\domain \big(h_0 - h_0(\alpha)\big)(z),\quad z\in \domain
 \end{equation} 
 where
 \begin{equation*}\label{E:def-h_0}
 h_0(\zeta) :=  \int\limits_{\gamma(\zeta_0, \zeta)}\!\!\!\!\!  ig(\eta)d\sigma(\eta)\quad \in\quad  h^{1,p}(\bndry\domain)
 \end{equation*}
 with $\zeta_0\in\bndry\domain$ fixed arbitrarily and 
  \begin{equation}\label{E:path-def}
    \gamma (\zeta_0, \zeta)\ := \left\{
      \begin{array}{lcll}
      \text{the piece of $\bndry\domain$ going from $\zeta_0$ to $\zeta$ in the positive direction} & \text{if}& \zeta_0\neq\zeta\, ,\\
      \emptyset & \text{if} & \zeta_0 =\zeta\, .
      \end{array}
\right.
\end{equation}
Furthermore, for each $\alpha\in\domain$ we have
\begin{equation*}\label{E:temp5}
\|G_\alpha^*\|_{L^\infty(\bndry\domain, \sigma)} \ +\ \|(G_\alpha')^*\|_{L^p(\bndry\domain, \sigma)}\ \lesssim\ 
\|g\|_{L^p(\bndry\domain, \sigma)}, \quad 1<p<\infty.
\end{equation*}
The implied constant depends only on $\domain$, $p$ and $\alpha$.
\end{itemize}

\end{thm}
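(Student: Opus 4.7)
The plan is to reduce the Neumann problem for $\deebar$ to the Dirichlet problem of Theorem \ref{T:D-Dbar}, applied to the complex derivative $G'$ rather than to $G$ itself. The key ingredient, available from the earlier analysis of $\Hs^{1,p}(\domain)$ in Proposition \ref{d}, is that for any $H\in\Hs^{1,p}(\domain)$ one has $\dee_T\dot H = T\,\dot{(H')}$ $\sigma$-a.e.\ on $\bndry\domain$. Combined with $\dee G/\dee n = -iT\,\dot{(G')}$ from \eqref{E:normal-deriv-hol}, this converts the Neumann condition $\dee G/\dee n = g$ into the pointwise Dirichlet condition $\dot{(G')} = i\overline{T}g$ for the holomorphic function $G'\in\Hs^p(\domain)$.

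From this reduction, solvability and necessity are immediate. Given $g\in\neu^{p}(\bndry\domain)$, write $g = -i\dee_T h$ with $h\in h^{1,p}(\bndry\domain)$, lift $h$ to $H\in\Hs^{1,p}(\domain)$, and verify $\dee H/\dee n = -iT\,\dot{(H')} = -i\dee_T h = g$; hence $H$ solves \eqref{NdbarIntro}. Conversely, for any solution $G\in\Hs^{1,p}(\domain)$ the boundary datum $\dot G$ lies in $h^{1,p}(\bndry\domain)$ and $g = -i\dee_T\dot G\in\neu^p(\bndry\domain)$. Uniqueness modulo constants follows by applying the uniqueness clause of Theorem \ref{T:D-Dbar} to $(G_1-G_2)'\in\Hs^p(\domain)$, whose boundary value vanishes $\sigma$-a.e. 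The representation $G'(z) = \CintD(i\overline{T}g)(z)$ and the estimate $\|(G')^*\|_{L^p(\bndry\domain,\sigma)}\lesssim\|g\|_{L^p(\bndry\domain,\sigma)}$ for $1<p<\infty$ are then precisely what Theorem \ref{T:D-Dbar} yields for $G'$, since $|i\overline{T}|\equiv 1$ on $\bndry\domain$.

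For the normalized solution $G_\alpha$, the strategy is to build a primitive of $G'$ first at the boundary level. Setting $h_0(\zeta):=\int_{\gamma(\zeta_0,\zeta)}ig\,d\sigma$, the fundamental theorem of calculus along $\bndry\domain$ together with $ig = \dee_T h$ yields $h_0 = h - h(\zeta_0)$, so $h_0\in h^{1,p}(\bndry\domain)$ with holomorphic extension $H_0\in\Hs^{1,p}(\domain)$; then $G_\alpha := H_0 - H_0(\alpha)$ solves \eqref{NdbarIntro}, vanishes at $\alpha$, and is the unique such solution by the uniqueness already established. The representation \eqref{E:temp11} follows by applying Theorem \ref{T:D-Dbar} to $G_\alpha\in\Hs^{1,p}(\domain)\subseteq\Hs^p(\domain)$, whose boundary value is $h_0-h_0(\alpha)$. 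The $C^{1-1/p}(\overline\domain)$ bound comes from the one-dimensional Sobolev embedding $W^{1,p}(\bndry\domain,\sigma)\hookrightarrow C^{1-1/p}(\bndry\domain)$ applied to $\dot{G_\alpha}\in h^{1,p}\subset W^{1,p}$, followed by the classical fact that a holomorphic function on a Lipschitz domain with Hölder-continuous boundary values extends Hölder-continuously to $\overline\domain$; the $L^\infty$ control on $G_\alpha^*$ is a by-product, or alternatively follows from a direct path integration of $G_\alpha'$ against Hölder's inequality applied to $(G_\alpha')^*\in L^p$. The main technical hurdle I foresee is the rigorous verification of $\dee_T\dot H = T\,\dot{(H')}$ at the level of non-tangential boundary traces in the Sobolev-Hardy space, since the entire reduction rests on this identity.
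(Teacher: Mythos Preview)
Your proposal is correct and follows essentially the same route as the paper's proof: reduce the Neumann condition to the Dirichlet condition $\dot{(G')}=i\overline T g$ via \eqref{E:normal-deriv-hol} and \eqref{E:tan-der}, invoke Theorem~\ref{T:D-Dbar} for the representation and $L^p$ estimate on $G'$, and for the normalized solution build the boundary primitive $h_0$ of $ig$, identify it (up to a constant) with an element of $h^{1,p}(\bndry\domain)$, and pass through the Sobolev embedding and the H\"older boundedness of $\mathbf C_\domain$ on Lipschitz domains. The only cosmetic difference is that the paper verifies $\partial_T h_0 = ig$ via a Lebesgue-point argument and then invokes \cite[Theorem 3.6.5]{Mor} to conclude $h_0-h$ is constant, whereas you state the equivalent fact $h_0 = h - h(\zeta_0)$ directly as a one-dimensional FTC; and the technical hurdle you flag is exactly Proposition~\ref{d}\eqref{dparta}, already established.
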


\vskip0.1in
For the Robin problem for the $\deebar$ operator \eqref{RdbarIntro} we may take the Robin coefficient $b$ to be in $L^p(\bndry\domain, \sigma)$: this is on account of the inclusion $h^{1,p}(\bndry\domain)\subset C^{1 -\frac1p}(\bndry\domain)$ for any $p\geq 1$
 (Proposition \ref{d});
 in particular $h^{1,p}(\bndry\domain) \subset L^\infty(\bndry\domain)$ so that
  $b\,\dot G\in L^p(\bndry\domain, \sigma)$ for any $G\in \Hs^{1,p}(\domain)$ and $b\in L^p(\bndry \domain, \sigma)$.
 In order to guarantee uniqueness of the solution, as well as a representation formula, we further require that
\begin{equation}\label{E:compint}
     \int\limits_{\bndry\domain}\! b(\xi)\, d\sigma(\xi)\ \ne\ 2k\pi \quad\text{for any}\quad  k\in \mathbb Z.
\end{equation}
See Example \ref{rr} for a holomorphic Robin problem which has infinitely many solutions where the Robin coefficient $b$  is not identically 0 and does not satisfy condition \eqref{E:compint}. Note that 
the Neumann condition cannot be regarded as a sub-case of the Robin condition (namely with $b:= 0$) because it does not satisfy \eqref{E:compint}.

 For any $\zeta, \ \xi\ \in\bndry\domain$  we let $\gamma (\zeta, \xi)$ be as in \eqref{E:path-def} (with $\zeta$ and $\xi$ in place of $\zeta_0$ and $\zeta$, respectively).
  Define the operator
  \begin{equation}\label{E:auxOp}
     \displaystyle{ 
       {\mathcal T_b r}(\zeta): = \frac{\displaystyle{i\int\limits_{\bndry\domain}\!\!r(\xi) \,\,e^{\,\,\int\limits_{\gamma(\zeta, \xi)}\!\!\!\!\!\! i\,b(\eta) d\sigma(\eta)}d\sigma(\xi)}}
     {e^{\,\,\int\limits_{\bndry\domain}\!\! i\,b(\xi) d\sigma(\xi)} -1}
    ,\ \ \  \ \zeta\in\bndry\domain.}
  \end{equation}
From \eqref{E:compint}, it is clear the denominator in \eqref{E:auxOp} cannot vanish, and this makes it easy to see that
 
 $$
 \mathcal T_b: \ L^p(\bndry\domain, \sigma)\to L^\infty(\bndry\domain, \sigma),\quad 1\le p\le \infty
 $$
 is bounded with
\begin{equation}\label{sup}
    \|\mathcal T_b r\|_{L^\infty(\bndry\domain, \sigma)}\leq \sigma(bD)^{1-\frac{1}{p}}\, C(\domain, b) \  \|r\|_{L^p(\bndry\domain, \sigma)}
\end{equation}
   where $\sigma(bD)$ is the length of $bD$, and 
 $$
 C(\domain, b) := \, e^{\|b\|_{L^1(\bndry\domain, \sigma)}} \left| e^{\,\,\int\limits_{\bndry\domain}\!\! i\,b(\xi) d\sigma(\xi)} -1\right|^{-1}.
 $$
 We also have that $\mathcal T_b$ is smoothing for $L^p(\bndry\domain, \sigma)$ whenever $1<p<\infty$, see
 Proposition \ref{P:Top} for the precise statement. 
   We may now state our third main result.

\begin{thm}\label{T:RDbar} Let $\domain\subset \mathbb C$ be a bounded simply connected Lipschitz domain and let $p\geq 1$. Suppose that the Robin coefficient $b$ is in $L^p(\bndry\domain, \sigma)$  and satisfies 
condition \eqref{E:compint}.
Given  $r\in L^p(\bndry\domain, \sigma)$ we have that
\eqref{RdbarIntro}
  is      solvable
  if and only if $r\in \reu^p_b(\bndry\domain)$, in which case \eqref{RdbarIntro} has a unique solution and it lies in $\Hs^{1,p}(\domain)$.
   Furthermore,
\begin{itemize}
\item the solution $G$ admits the representation $$G(z) = (\CintD\circ \mathcal T_b)\, r(z),\quad z\in\domain\,,$$
where $ \mathcal T_b$ is defined in \eqref{E:auxOp};
\item for any $1<p<\infty$
 we have that
\begin{equation}\label{E:Rqbound} 
 \|G^*\|_{L^\infty(\bndry\domain, \sigma)}
+\|( G')^*\|_{L^p(\bndry\domain, \sigma)}\lesssim\|r\|_{L^p(\bndry\domain, \sigma)},
 \end{equation}
 where the implied constant depends only on $\domain$, $p$ and the Robin coefficient $b$.
 \end{itemize}
  \end{thm}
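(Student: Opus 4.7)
The plan is to reduce the Robin condition to a first-order linear ODE along $\bndry\domain$ and then to invoke Theorem~\ref{T:D-Dbar-reg}. For any $G\in \Hs^{1,p}(\domain)$, the identity \eqref{E:normal-deriv-hol} together with the boundary chain rule $\dee_T\dot G(\zeta) = T(\zeta)\dot{(G')}(\zeta)$ gives
$$\frac{\partial G}{\partial n}(\zeta) \;=\; -iT(\zeta)\dot{(G')}(\zeta) \;=\; -i\,\dee_T\dot G(\zeta)\qquad \sigma\text{-a.e.\ on } \bndry\domain.$$
Setting $h:=\dot G$, the Robin condition in \eqref{RdbarIntro} is therefore equivalent to the boundary ODE
$$-i\,\dee_T h(\zeta) \;+\; b(\zeta)\,h(\zeta) \;=\; r(\zeta),\qquad \sigma\text{-a.e.\ on } \bndry\domain.\qquad (\star)$$
By definition \eqref{D:Rdata}, $r\in\reu^p_b(\bndry\domain)$ holds exactly when $(\star)$ admits some $h\in h^{1,p}(\bndry\domain)$; equivalently, exactly when \eqref{RdbarIntro} is solvable, and in that case the holomorphic solution will be $G=\CintD h$.

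To solve $(\star)$, I would multiply by $i$ to get $\dee_T h + ib\,h = ir$, fix an arbitrary base point $\zeta_0\in\bndry\domain$, and apply the integrating factor $\mu(\zeta):= e^{\int_{\gamma(\zeta_0,\zeta)} ib\,d\sigma}$ along an arc-length parametrization. Integrating between $\zeta_0$ and a generic $\zeta$ gives
$$\mu(\zeta)\,h(\zeta) \;-\; h(\zeta_0) \;=\; \int_{\gamma(\zeta_0,\zeta)} \mu(\eta)\,i\,r(\eta)\,d\sigma(\eta),$$
and imposing single-valuedness of $h$ around the full loop yields the scalar relation $(\mu_{\mathrm{full}}-1)\,h(\zeta_0) = \int_{\bndry\domain}\mu(\eta)\,i\,r(\eta)\,d\sigma(\eta)$ with $\mu_{\mathrm{full}}:= e^{\int_{\bndry\domain} ib\,d\sigma}$. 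Condition \eqref{E:compint} is exactly what forces $\mu_{\mathrm{full}}\ne 1$, so $h(\zeta_0)$ is uniquely determined; unwinding the integrating factor then reproduces the explicit formula $h = \mathcal T_b r$ in the notation \eqref{E:auxOp}. The same argument applied to the difference of two solutions of \eqref{RdbarIntro} (with $r=0$) shows that the homogeneous version of $(\star)$ admits only $h\equiv 0$ in $h^{1,p}(\bndry\domain)$, which combined with Theorem~\ref{T:D-Dbar} gives uniqueness.

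The subtle step is to verify that $h=\mathcal T_b r$ actually belongs to $h^{1,p}(\bndry\domain)$ rather than merely to $W^{1,p}(\bndry\domain,\sigma)$. The bound \eqref{sup} gives $h\in L^\infty(\bndry\domain,\sigma)$, and then $(\star)$ yields $\dee_T h = i(bh-r)\in L^p(\bndry\domain,\sigma)$, so at least $h\in W^{1,p}(\bndry\domain,\sigma)$. Promoting this to $h\in h^{1,p}(\bndry\domain)$ — equivalently, showing that $\CintD h$ lies in $\Hs^{1,p}(\domain)$ and not just in $\Hs^p(\domain)$ — is precisely the content of Proposition~\ref{P:Top} on the smoothing of $\mathcal T_b$. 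Granting that, setting $G := \CintD h = (\CintD\circ\mathcal T_b)\,r$ produces, by Theorem~\ref{T:D-Dbar-reg}, a function in $\Hs^{1,p}(\domain)$ with $\dot G = h$; since $h$ satisfies $(\star)$ by construction, $G$ solves \eqref{RdbarIntro} with the claimed representation.

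Finally, for $1<p<\infty$ the estimate \eqref{E:Rqbound} follows by chaining available bounds: Theorem~\ref{T:D-Dbar-reg} applied to $G=\CintD h$ gives
$$\|G^*\|_{L^\infty(\bndry\domain,\sigma)} \;+\; \|(G')^*\|_{L^p(\bndry\domain,\sigma)} \;\lesssim\; \|h\|_{W^{1,p}(\bndry\domain,\sigma)},$$
and then $(\star)$ together with \eqref{sup} and $b\in L^p(\bndry\domain,\sigma)$ produces
$$\|h\|_{W^{1,p}(\bndry\domain,\sigma)} \;\leq\; \|h\|_{L^p} + \|\dee_T h\|_{L^p} \;\lesssim\; \|h\|_{L^\infty}\bigl(1+\|b\|_{L^p}\bigr) + \|r\|_{L^p} \;\lesssim\; C(\domain,b,p)\,\|r\|_{L^p(\bndry\domain,\sigma)}.$$
The chief obstacle throughout is precisely the promotion of $\mathcal T_b r$ from $W^{1,p}(\bndry\domain,\sigma)$ into $h^{1,p}(\bndry\domain)$: this requires confronting the $L^p$ theory of the Cauchy integral on Lipschitz curves head-on, which is the substance of Proposition~\ref{P:Top}; once that is in place, the Robin theory follows cleanly from the Regularity-for-Dirichlet theorem.
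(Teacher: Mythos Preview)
Your proof is correct and follows essentially the same route as the paper: reduce the Robin condition to the boundary ODE $(\star)$, solve it uniquely via the integrating factor to recover $\mathcal T_b r$, invoke Proposition~\ref{P:Top} to place $\mathcal T_b r$ in $h^{1,p}(\bndry\domain)$, and then appeal to Theorem~\ref{T:D-Dbar-reg} for existence, uniqueness, the representation formula, and the estimate. One small correction of emphasis: the ``promotion'' $\mathcal T_b r\in h^{1,p}(\bndry\domain)$ in Proposition~\ref{P:Top}\,{\tt(iii.)} does not confront the Cauchy integral at all---it is a one-line consequence of the \emph{definition} of $\reu^p_b(\bndry\domain)$ (which already furnishes some $h\in h^{1,p}$ solving $(\star)$) together with the ODE uniqueness from part~{\tt(ii.)}; the genuine $L^p$ theory on Lipschitz curves is entirely packaged inside Theorem~\ref{T:D-Dbar-reg} via Lemma~\ref{LaplacianLemma}.
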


\vskip0.1in

\begin{rem}\label{R:simply versus N-connected} Any bounded Lipschitz domain must be finitely connected
(because locally it is a graph domain). A large portion of the harmonic analysis literature for Lipschitz domains requires the ambient domain to be simply connected: sometimes this choice is only a matter of convenience (the results can be extended to any bounded Lipschitz domain by routine arguments). But there are also results whose proofs  rely on conformal mapping (this is the case for e.g., the proof of identity \eqref{E:JK}, see \cite{JK}): extending such results to arbitrary Lipschitz domains would require a detailed analysis of the boundary behavior of the Ahlfors map (the multiply-connected-domain analog of the Riemann map). To avoid these issues here we focus on simply connected 
domains (with a few exceptions in Section \ref{SobolevHardySection}). 
Further results
for multiply connected $\domain$ can be found in \cite[Section 5]{GLZ}. 
\end{rem}

\begin{rem}\label{R:regular cones}We expect that our results may hold in less regular settings than Lipschitz.
Here we restrict the attention to the 
Lipschitz category
because already it
displays several features 
typical of most non-smooth frameworks e.g., the restricted $p$-range for the boundary value problems for the Laplacian, see Theorems \ref{T:DL} and \ref{T:NL} below,
 yet
the proofs are  less technical than those for more irregular settings. 
For instance, for the non-tangential approach region $\Gamma (\zeta)$, $\zeta\in\bndry\domain$, here we may take the so-called {\em regular cone} (or {\em coordinate cone})  given in Definition \ref{D:ap}, which is easier to work with than the corkscrew-like region $\{z\in \domain:\, |z-\zeta |\leq (1+\beta)\dist(z, \bndry \domain)\}$ 
needed to work with
 domains whose boundary  is not a  local-graph.
\end{rem}

We henceforth use the following notation:  two quantities $A$ and $B$ are said to satisfy $A\lesssim B$, if  $A\le CB$ for some constant $C>0$ which may depend only on the domain $\domain$, the exponent $p$ 
and, when relevant, the fixed point $\alpha$, or the Robin coefficient $b$. We say  $A\approx B$ if and only if $A\lesssim B$ and $B\lesssim A$ at the same time.
\vskip0.1in
This paper is organized as follows. In Section \ref{SS:Preliminaries} we collect a few well known features of Lipschitz domains that are relevant here, and we recall the Dirichlet and  Neumann problems for the Laplace operator; we also prove Theorem \ref{T:D-Dbar}. In Section \ref{SobolevHardySection} we present the main properties of the holomorphic Sobolev-Hardy space $\Hs^{1,p}(\domain)$. In Section \ref{S:T:D-Dbar-reg} we prove Theorem \ref{T:D-Dbar-reg} (Regularity problem for $\deebar$). Finally, Theorem \ref{T:NDbar} (Neumann problem for $\deebar$) and Theorem \ref{T:RDbar} (Robin problem for $\deebar$) are proved in Sections \ref{NeumannSection} and  \ref{RobinSection},  respectively. 

\vskip0.1in
\noindent\textbf{Acknowledgement: } This work was launched at the AIM workshop {\em Problems on Holomorphic Function Spaces \& Complex Dynamics},
an activity of the AWM Research Network in Several Complex Variables.
We thank
the American Institute of Mathematics and  the Association for Women in Mathematics for their hospitality and support. We also wish to express our gratitude to Zhongwei Shen for helpful discussions.

\section{Preliminaries}\label{SS:Preliminaries}

Throughout this paper the domains under consideration will be Lipschitz domains on $\mathbb C$, as defined below.
\begin{defn}\label{de}
A bounded domain $\domain\subset\mathbb{C}$ with boundary $\bndry\domain$ is called a \emph{Lipschitz domain} if there are finitely many rectangles $\{R_j\}_{j=1}^m$ with sides parallel to the coordinate axes, angles $\{\theta_j\}_{j=1}^m$, and Lipschitz functions $\phi_j:\mathbb{R}\to\mathbb{R}$ such that the collection $\{e^{-i\theta_j}R_j\}_{j=1}^m$ covers $\bndry\domain$ and $(e^{i\theta_j}\domain)\cap R_j=\{x+iy: y > \phi_j(x),\  x\in (a_j, b_j)\}$ for some $a_j<b_j<\infty$. 
We refer to such $R_j$'s as {\em coordinate rectangles}.
\end{defn}

\begin{defn}\label{D:ap}
Let $\domain$ be a Lipschitz domain and fix $\beta>0$. For any $\zeta\in\bndry \domain$, let  
$\{\Gamma(\zeta), \zeta\in \domain\}$ be a family of truncated (one-sided) open 
 cones $\Gamma(\zeta)$ with vertex at $\zeta$ satisfying the following property: for each rectangle $R_j$ 
 in Definition \ref{de}, there exists two cones $\alpha$ and $\beta$, each with vertex at the origin and axis along the $y$ axis such that for $\zeta\in \bndry\domain \cap e^{-i\theta_j}R_j$,
$$ e^{-i\theta_j}\alpha +\zeta\quad \subset\quad  \Gamma(\zeta)\quad \subset\quad  \overline{\Gamma(\zeta)}\setminus \{\zeta\}\quad \subset\quad  e^{-i\theta_j}\beta+\zeta \quad  \subset\quad   \domain\ \cap\  e^{-i\theta_j}R_j. $$
It is well known that for Lipschitz $\domain$, $\Gamma (\zeta)\neq \emptyset$ for any $\zeta \in \bndry\domain$; see e.g.,  \cite{Dah2} or \cite[Section 0.4]{V}. Sometimes  $\Gamma (\zeta)$ is referred to as a {\em regular cone}, or a {\em coordinate cone}, see Remark \ref{R:regular cones}.
\end{defn}

We will need  an approximation scheme of $\domain$ by smooth subdomains constructed by Ne\v{c}as in \cite{Necas}, which we refer to as a \emph{Ne\v{c}as exhaustion of $\domain$}. See also \cite{L1} and \cite[Theorem 1.12]{V}. (Recall that Lipschitz functions are differentiable almost everywhere; thus if $\domain$ is a bounded Lipschitz domain, its boundary $\bndry\domain$
  is a rectifiable Jordan curve
  that admits a (positively oriented) unit tangent vector $T(\zeta)$ for $\sigma$-a.e. $\zeta\in\bndry\domain$.)

\begin{lem}\cite[p. 5]{Necas}\cite[Theorem 1.12]{V}\label{L:NecasExhaustion}
Let $\domain$ be a bounded Lipschitz domain. There exists a family $\{\domain_k\}_{k=1}^\infty$ of smooth domains with $\domain_k$ compactly contained in $\domain$ that satisfy the following:
\begin{enumerate}[(a).]
\item For each $k$ there exists a Lipschitz diffeomorphism $\Lambda_k$ that takes $\domain$ to $\domain_k$ and extends to the boundaries: $\Lambda_k:\bndry\domain\to \bndry\domain_k$ with the property that
\[\sup\{|\Lambda_k(\zeta)-\zeta|:\zeta\in \bndry\domain\}\leq C/k\]
for some fixed constant $C$. Moreover $\Lambda_k(\zeta)\in \Gamma(\zeta)$.
\item There is a covering of $\bndry\domain$ by finitely many coordinate rectangles which also form a family of coordinate rectangles for $\bndry\domain_k$ for each $k$. Furthermore for every such rectangle $R$, if $\phi$ and $\phi_k$
  denote the Lipschitz functions whose graphs describe the boundaries of $\domain$ and $\domain_k$, respectively, in $R$, then $\|(\phi_k)'\|_\infty \leq \|\phi'\|_\infty$ for any $k$; $\phi_k\to\phi$ uniformly as $k\to\infty$, and $(\phi_k)'\to\phi'$ a.e. and in every $L^p((a, b))$ with $(a, b)\subset\mathbb R$ as in Definition \ref{de}.
 \item There exist constants $0<m<M<\infty$ and positive functions (Jacobians) $w_k:\bndry\domain\to [m,M]$ for any $k\in \mathbb N$, such that for any measurable set $F\subseteq \bndry\domain$ and 
 for any measurable function $f_k$ on $\Lambda_k (F)$ the following change-of-variables formula holds:
\[\int\limits_{F}
\!\!
f_k(\Lambda_k(\eta))\,w_k(\zeta)\,d\sigma(\eta) = \int\limits_{ \Lambda_k(F)}
\!\!\!
f_k(\eta_k)\,d\sigma_k(\eta_k).\]
where $d\sigma_k$ denotes arc-length measure on $\bndry\domain_k$.  
Furthermore we have
 $$w_k\to  1 \quad \sigma\text{-a.e.}\ \bndry\domain\ \  \text{and in every}\quad  L^p(\bndry\domain,\sigma)\ \ \text{for any}\ \  1\leq p <\infty\, .$$
\item Let $T_k$ denote the unit tangent vector for $\bndry\domain_k$ and $T$ denote the unit tangent vector of $\bndry\domain$. We have that

$$T_k\to  T \quad \sigma\text{-a.e.}\ \bndry\domain\ \  \text{and in every}\quad  L^p(\bndry\domain,\sigma)\ \ \text{for any}\ \  1\leq p <\infty\, .$$

\end{enumerate}
\end{lem}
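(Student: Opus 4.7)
The plan is to follow the mollification scheme of Ne\v{c}as, localized via the finite atlas $\{R_j\}_{j=1}^m$ from Definition \ref{de}, and then glue by a partition of unity. In each coordinate rectangle (after rotation by $\theta_j$), the boundary is the graph of a Lipschitz function $\phi_j:(a_j,b_j)\to\mathbb R$. First I would extend each $\phi_j$ to all of $\mathbb R$ preserving its Lipschitz constant, then convolve with a standard mollifier: $\phi_{j,k}:=\phi_j*\rho_{1/k}-\varepsilon_k$, where $\rho_\varepsilon$ is a bump supported in $(-\varepsilon,\varepsilon)$ and $\varepsilon_k\downarrow 0$ is chosen so the graph of $\phi_{j,k}$ sits strictly inside $\domain$ (the $-\varepsilon_k$ shift enforces compact containment, since $\varepsilon_k$ can be made larger than the uniform oscillation $\|\phi_{j,k}-\phi_j\|_\infty$). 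Standard mollification facts give $\|\phi_{j,k}'\|_\infty\le\|\phi_j'\|_\infty$, $\phi_{j,k}\to\phi_j$ uniformly, and $\phi_{j,k}'\to\phi_j'$ both almost everywhere and in every $L^p$, which is precisely what item (b) asks for.

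Next I would paste these local pieces together. On the overlap of two rotated rectangles $e^{-i\theta_j}R_j$ and $e^{-i\theta_i}R_i$ the boundary is simultaneously a graph in two directions, so the local smooth graphs need not agree. I would resolve this by choosing a smooth partition of unity $\{\chi_j\}$ subordinate to $\{e^{-i\theta_j}R_j\}$ on a tubular neighborhood of $\bndry\domain$, and defining the approximating hypersurface via a convex combination applied in the \emph{normal} direction to $\bndry\domain$: roughly, shift each boundary point $\zeta$ by the weighted average of the inward normal displacements prescribed by the various $\phi_{j,k}$'s. Because all relevant normals point into the cone $\Gamma(\zeta)$ and the local pointwise Lipschitz bounds are preserved by convex combination, the resulting $\bndry\domain_k$ is a smooth Jordan curve whose local graph representations $\phi_k$ still satisfy $\|\phi_k'\|_\infty\le\|\phi'\|_\infty$. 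Taking $\Lambda_k(\zeta)$ to be this shifted point gives a Lipschitz diffeomorphism onto $\bndry\domain_k$; the fact that $\Lambda_k(\zeta)-\zeta$ points into the vertical cone (and hence into $\Gamma(\zeta)$ by Definition \ref{D:ap}) follows once $k$ is large enough that $\varepsilon_k$ dominates the $O(1/k)$ mollification error, and the bound $|\Lambda_k(\zeta)-\zeta|\le C/k$ is immediate from $\|\phi_{j,k}-\phi_j\|_\infty+\varepsilon_k=O(1/k)$.

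Items (c) and (d) would then be extracted from the change-of-variables formula for the $\Lambda_k$. In each coordinate patch the Jacobian $w_k$ reduces to a ratio of arc-length elements $\sqrt{1+|\phi_k'|^2}\,/\sqrt{1+|\phi'|^2}$ multiplied by the Jacobian of the tangential component of $\Lambda_k$; the two-sided bound $w_k\in[m,M]$ comes from the uniform Lipschitz control $\|\phi_k'\|_\infty\le\|\phi'\|_\infty$ in (b), and the convergence $w_k\to 1$ a.e. and in $L^p$ comes from $\phi_k'\to\phi'$ in the same senses combined with dominated convergence. The tangent vector is $T_k=(1,\phi_k')/\sqrt{1+|\phi_k'|^2}$ (in local coordinates), so $T_k\to T$ a.e. and in every $L^p$ follows from the same convergence statement for $\phi_k'$, plus the uniform $L^\infty$ bound to invoke dominated convergence.

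The main obstacle is the gluing step across overlapping coordinate rectangles while simultaneously (i) keeping the approximating boundary strictly inside $\domain$, (ii) preserving the per-patch Lipschitz bound $\|\phi_k'\|_\infty\le\|\phi'\|_\infty$, and (iii) guaranteeing $\Lambda_k(\zeta)\in\Gamma(\zeta)$. Using the normal direction of $\bndry\domain$ (rather than the coordinate direction of any single patch) for the shift is what makes this consistent, since the regular cone $\Gamma(\zeta)$ contains an inward cone around the normal for $\sigma$-a.e.\ $\zeta$; once this is set up correctly, everything else reduces to routine mollification estimates.
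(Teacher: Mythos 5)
The paper does not actually prove this lemma: it is imported from Ne\v{c}as (p.~5) and Verchota (Theorem~1.12), so there is no in-paper argument to compare with, and your outline does follow the same general route as the cited construction (mollify the local graph functions, then glue). As written, however, the gluing step has a genuine gap. You propose to move each boundary point along the inward unit normal of $\bndry\domain$, by a smooth convex combination of the displacements supplied by the patches. Two things go wrong. First, the unit normal of a Lipschitz boundary is defined only $\sigma$-a.e.\ and is merely a bounded measurable vector field; displacing $\zeta$ in the direction $n(\zeta)$ therefore does not produce a continuous curve, let alone a smooth Jordan curve, and the resulting $\Lambda_k$ need not be a homeomorphism of $\bndry\domain$ onto $\bndry\domain_k$ (nor extend to a Lipschitz diffeomorphism of $\domain$ onto $\domain_k$, which the lemma also asserts). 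Second, the claim that $\Gamma(\zeta)$ contains a cone about the inward normal is false in general: by Definition~\ref{D:ap}, $\Gamma(\zeta)$ is contained in a coordinate cone about the patch-vertical direction whose half-aperture cannot exceed $\arctan\bigl(1/\|\phi'\|_\infty\bigr)$, while at a.e.\ point the inward normal tilts from that vertical by up to $\arctan\bigl(\|\phi'\|_\infty\bigr)$; when the Lipschitz constant exceeds $1$ the normal lies outside $\Gamma(\zeta)$ on a set of positive measure, so neither $\Lambda_k(\zeta)\in\Gamma(\zeta)$ nor $\bndry\domain_k\subset\domain$ follows from your argument. (There is also a sign slip: with $\domain\cap R=\{y>\phi(x)\}$ as in Definition~\ref{de}, compact containment requires \emph{raising} the mollified graph, i.e.\ $\phi*\rho_{1/k}+\varepsilon_k$, not subtracting $\varepsilon_k$, which would push the approximating boundary outside $\domain$.)

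The standard repair, and essentially what Ne\v{c}as/Verchota do, is to keep the displacements in the patchwise \emph{vertical} (coordinate) directions and blend those directions, not the normals, with the partition of unity: each $\Gamma(\zeta)$ is a convex cone containing the vertical cone of its own patch, so a convex combination of small vertical displacements coming from overlapping patches stays in $\Gamma(\zeta)$ for \emph{every} $\zeta$, and the blended direction field is smooth and uniformly transversal to $\bndry\domain$. Even with this fix, the substantive work---which your sketch dismisses with ``Lipschitz bounds are preserved by convex combination''---is to verify that the glued curve is, in each fixed coordinate rectangle, again the graph of a smooth $\phi_k$ with $\|\phi_k'\|_\infty\le\|\phi'\|_\infty$ and $\phi_k'\to\phi'$ a.e.; re-expressing the mollified graph of a neighboring patch in the coordinates of the given patch is exactly where this must be checked, and it is not a formal consequence of convexity. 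Items (c) and (d) (Jacobians and tangents) are indeed routine once (a) and (b) are in place, by the dominated-convergence argument you indicate.
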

\vskip0.1in

(Note that in conclusions {\em (b)} through  {\em (d)} the exponent $p=\infty$ cannot be allowed unless $\domain$ is of class $C^1$.)

\subsection{Boundary value problems for the Laplace operator}
Let $\domain$ be a simply connected Lipschitz domain. Given $F:\domain\to\mathbb{C}$, for any $\zeta\in\bndry\domain$ we define the \emph{non-tangential maximal function of $F$}
as
\begin{equation}\label{D:Fstar}
F^*(\zeta) := \sup_{ z\in \Gamma (\zeta)}|F(z)|
\end{equation}
where $\Gamma(\zeta)$ is the coordinate cone given in Definition \ref{D:ap}.
Furthermore,   define the \emph{non-tangential limit of $F$} as
\begin{equation}\label{D:Fdot}
\dot{F}(\zeta) := \lim_{\stackrel{z\to \zeta}{ z\in \Gamma (\zeta)}}F(z).
\end{equation}
It is known that if $F\in \Hs^p(\domain)$, then $\dot{F}(\zeta)$ exist for $\sigma$-a.e. $\zeta\in\bndry\domain$ and  $\dot F\in L^p(\bndry\domain, \sigma)$,   see \cite[Theorem 10.3]{Duren}.

\vskip0.12in

For convenience we  state without proof some relevant features of two well-known boundary value problems for the Laplace equation on Lipschitz domains.

\vskip0.1in

\begin{thm}
\cite[Def. 1.7.4; Coroll. 2.1.6 \& Thm 2.2.22]{Kenig-2};
\cite[Corollary 3.2]{V}
\label{T:DL} 
Let $\realdomain\subset \mathbb{R}^k, k\ge 2$, be a bounded simply connected Lipschitz domain. Consider the {\bf Dirichlet problem for Laplace's operator}
\begin{equation}\label{E:DL}
     \left\{
      \begin{array}{lcll}
      \Delta U &= &0 & \text{in} \ \ \realdomain;\\ \\
      \dot U(\zeta) &= &u(\zeta) & \text{for}\ \sigma\text{-a.e.}\ \zeta\in \bndry\realdomain;\\ \\
      U^*&\in& L^p(\bndry\realdomain, \sigma)
      \end{array}
\right.
\end{equation}
where the datum $u$ is in $L^p(\bndry\realdomain,\sigma)$. Then there is $\delta = \delta(\realdomain)>0$ such that \eqref{E:DL} is uniquely solvable whenever $u\in L^p(\bndry\realdomain, \sigma)$ with $2-\delta< p<\infty$. Morevoer
\begin{equation*}\label{E:NTbdDirichlet}
\| U^*\|_{L^p(\bndry\realdomain, \sigma)}\lesssim \| u\|_{L^p(\bndry\realdomain, \sigma)}.
\end{equation*}
\end{thm}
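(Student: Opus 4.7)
I would follow the classical layer-potential approach of Verchota (see \cite{V} and also \cite{Kenig-2}). Represent the candidate solution as the double layer potential $U = \mathcal D\phi$ for an unknown density $\phi\in L^p(\bndry\realdomain,\sigma)$. The standard jump relations reduce the boundary condition $\dot U = u$ to the integral equation
\[
(\tfrac12 I + K)\phi = u \qquad \text{on}\ \bndry\realdomain,
\]
where $K$ is the principal-value boundary double-layer operator. Two ingredients are then needed: (i) $L^p$-boundedness of $K$ together with the maximal bound $\|(\mathcal D\phi)^*\|_{L^p}\lesssim\|\phi\|_{L^p}$; (ii) invertibility of $\tfrac12 I + K$ on $L^p(\bndry\realdomain,\sigma)$.

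For (i) I would invoke the Coifman--McIntosh--Meyer theorem on Cauchy (and Calder\'on commutator) integrals on Lipschitz graphs, which yields boundedness for every $1<p<\infty$. For (ii) I would first establish invertibility on $L^2$ via the Rellich--Payne--Weinberger identities following Verchota: pick a vector field $h$ transverse to $\bndry\realdomain$ and integrate by parts on the smooth subdomains $\realdomain_k$ of Lemma~\ref{L:NecasExhaustion} to derive, for $U$ harmonic in $\realdomain_k$, the two-sided bound
\[
\|\nabla_T U\|_{L^2(\bndry\realdomain_k,\sigma_k)} \approx \|\partial_n U\|_{L^2(\bndry\realdomain_k,\sigma_k)}
\]
with constants independent of $k$; the required uniformity rests on $\|\phi_k'\|_\infty\le\|\phi'\|_\infty$ from Lemma~\ref{L:NecasExhaustion}(b). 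Combined with the jump relations and a Fredholm argument, passing to the limit $k\to\infty$ yields invertibility of $\tfrac12 I + K$ on $L^2(\bndry\realdomain,\sigma)$.

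The full range $2-\delta < p < \infty$ is then obtained in two steps. For $p>2$, I would interpolate between the $L^2$ solvability and the $L^\infty$ maximum principle---equivalently, invoke Dahlberg's theorem that harmonic measure belongs to $A_\infty(d\sigma)$, which furnishes a reverse H\"older inequality for the Poisson kernel with some exponent $q>1$, and dualize. For $p$ slightly below $2$, I would use the stability of the invertibility of $\tfrac12 I + K$ under continuous perturbations in the exponent, so that the set of admissible $p$ is open. The main obstacle I anticipate is the Rellich-identity step: executing it cleanly on the Ne\v{c}as exhaustion requires non-tangential maximal estimates that are uniform in $k$, which in turn leans on the Coifman--McIntosh--Meyer bounds applied to Lipschitz graphs of uniformly controlled constant. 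The restriction $p > 2-\delta$ is intrinsic to the Lipschitz setting: Dahlberg's counterexamples on domains with large Lipschitz constant show solvability can fail for $p$ close to $1$, so this strategy cannot be pushed further without additional hypotheses on $\realdomain$.
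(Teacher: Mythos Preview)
The paper does not prove this theorem: the authors explicitly write ``For convenience we state without proof some relevant features of two well-known boundary value problems for the Laplace equation on Lipschitz domains,'' and simply cite \cite{Kenig-2} and \cite{V}. So there is no in-paper argument to compare against.

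That said, your sketch is a faithful outline of the very proofs in those references. The layer-potential representation, the reduction to invertibility of $\tfrac12 I + K$, the Coifman--McIntosh--Meyer bounds for (i), Verchota's Rellich-identity argument on a Ne\v{c}as exhaustion for $L^2$-invertibility, and the perturbation to $p$ slightly below $2$ are exactly the ingredients in \cite{V} and \cite[Ch.~2]{Kenig-2}. One small comment: for the range $p>2$ you do not need Dahlberg's $A_\infty$ result separately once you have layer-potential invertibility at $p=2$; interpolation of invertibility (or Sneiberg/Calder\'on--Zygmund perturbation) together with the maximum principle already pushes invertibility of $\tfrac12 I + K$ to all $2\le p<\infty$, which is how \cite{V} proceeds. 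Your anticipated obstacle---uniform-in-$k$ control of the Rellich identities---is real but is handled in the cited works precisely via the uniform Lipschitz bounds in Lemma~\ref{L:NecasExhaustion}(b), as you note.
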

\vskip0.1in

Recall that the generalized normal derivative of $V$ at $\zeta\in\bndry\domain$ is
\begin{equation}\label{E:normal-deriv}
\frac{\dee V}{\dee n}(\zeta):=\lim_{\stackrel{z\to \zeta}{ z\in \Gamma (\zeta)}}\langle \nabla V (z), n(\zeta)\rangle_{\mathbb R} = \langle \dot{(\nabla V)}(\zeta), n(\zeta)\rangle_{\mathbb R}\quad \text{for}\quad \sigma-\text{a.e.}\ \zeta\in\bndry\realdomain,
\end{equation}
see \eqref{E:normal-deriv-hol}.

\begin{thm}\cite[(1.2) with $b:=0$]{LanSh};\cite[Corollary 2.1.11 p. 48, Theorem 2.2.22 p. 56]{Kenig-2};\cite[p.347]{DaKe}
\label{T:NL}
Let $\realdomain\subset \mathbb{R}^k, k\ge 2$ be a bounded simply connected Lipschitz domain. Consider the {\bf Neumann problem for Laplace's operator}
\begin{equation}\label{E:NL}
     \left\{
      \begin{array}{lcll}
      \Delta V & = & 0  &\text{in} \ \ \realdomain;\\ \\
      \displaystyle{\frac{\partial V}{\partial n}(\zeta)} &= &v(\zeta) &\ \sigma\text{-a.e.}\ \zeta\in \bndry\realdomain;\\ \\
     (\nabla V)^*&\in& L^p(\bndry\realdomain, \sigma)
      \end{array}
\right.
\end{equation}
where the datum $v$ is in $L^p(\bndry\real\domain, \sigma)$ and satisfies the compatibility condition
\begin{equation*}\label{E: compatibility Neumann}
\int\limits_{\bndry\domain} v(\zeta)\, d\sigma (\zeta) \ =\ 0.
\end{equation*}
Then there is $\n = \n (\realdomain)>0$ such that \eqref{E:NL} is uniquely solvable (modulo additive constants) whenever $v\in L^p(\bndry\realdomain, \sigma)$ with
   $1<p< 2 +\n$. 
  Moreover
\begin{equation*}\label{E:NTbdNeumann}
\| (\nabla V)^*\|_{L^p(\bndry\realdomain, \sigma)}\lesssim \| v\|_{L^p(\bndry\realdomain, \sigma)}.
\end{equation*}
where the implied constant depends solely on $\realdomain$.
\end{thm}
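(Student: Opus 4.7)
The plan is to solve \eqref{E:NL} by the method of layer potentials: represent the candidate solution as
\[V(x) := \mathcal{S}f(x) = \int\limits_{\bndry\realdomain}\Gamma(x-y) f(y)\,d\sigma(y),\qquad x\in\realdomain,\]
where $\Gamma$ is the standard fundamental solution of the Laplacian on $\mathbb{R}^k$ and $f\in L^p(\bndry\realdomain,\sigma)$ is an unknown density. Since $\mathcal{S}f$ is automatically harmonic in $\realdomain$, the Neumann condition in \eqref{E:NL} is reduced to a boundary integral equation for $f$.

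The Coifman--McIntosh--Meyer bound for the Cauchy integral on Lipschitz graphs implies that for each $1<p<\infty$ the components of $\nabla\mathcal{S}f$ admit non-tangential limits $\sigma$-a.e.\ on $\bndry\realdomain$, with
\[\|(\nabla\mathcal{S}f)^*\|_{L^p(\bndry\realdomain,\sigma)}\ \lesssim\ \|f\|_{L^p(\bndry\realdomain,\sigma)},\]
and that the interior jump relation
\[\frac{\dee \mathcal{S}f}{\dee n}\bigg|_{\text{n.t., int}} \ =\ \left(-\tfrac{1}{2}I + K^*\right)\!f\qquad \sigma\text{-a.e.\ on }\bndry\realdomain\]
holds, where $K^*$ is the $L^2$-adjoint of the boundary double layer operator. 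Solving \eqref{E:NL} therefore amounts to inverting $-\tfrac{1}{2}I + K^*$ on the mean-zero subspace $L^p_0(\bndry\realdomain,\sigma)$. The compatibility hypothesis $\int v\,d\sigma=0$ is exactly what places $v$ in the range of this operator, because a Green's-identity computation in the exterior domain shows that constants span the cokernel.

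The heart of the argument is this $L^p$ invertibility. First, by Verchota's Rellich-identity method for harmonic functions on Lipschitz domains, one proves the two-sided bound $\|\nabla\mathcal{S}f\|_{L^2(\bndry\realdomain,\sigma)} \approx \|f\|_{L^2(\bndry\realdomain,\sigma)}$ for $f\in L^2_0$, which in particular shows that $-\tfrac{1}{2}I + K^*$ is bounded below, hence invertible, on $L^2_0(\bndry\realdomain,\sigma)$. Second, an open-mapping / perturbation argument combined with the atomic $H^1$--$\mathrm{BMO}$ duality and interpolation technique of Dahlberg--Kenig extends invertibility to the open range $1<p<2+\n$ for some $\n=\n(\realdomain)>0$. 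Setting $f := (-\tfrac{1}{2}I+K^*)^{-1}v$ and $V := \mathcal{S}f$ produces a solution; the non-tangential maximal function estimate then follows by composing the $L^p$-boundedness of $f\mapsto(\nabla\mathcal{S}f)^*$ with the $L^p$ bound on the inverse operator. Uniqueness modulo additive constants is a standard Green's identity / Fatou-type argument using the $L^p$ non-tangential control on $\nabla V$.

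The main obstacle is extending Verchota's $L^2$ invertibility to the full range $1<p<2+\n$: unlike the Dirichlet case (Theorem \ref{T:DL}), where the solvability range $2-\delta<p<\infty$ opens toward $+\infty$ and can be handled by Calderón--Zygmund theory together with a good-$\lambda$ inequality, here the range opens toward $1$ and requires the more delicate atomic-decomposition machinery of Dahlberg--Kenig. The width $\n$ reflects the stability of the $L^2$ invertibility under small perturbations of the integrability exponent and cannot in general be enlarged without additional regularity on $\bndry\realdomain$.
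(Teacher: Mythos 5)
The paper does not actually prove Theorem \ref{T:NL}: it is listed among the results stated explicitly \emph{without proof}, with the argument deferred to the cited references \cite{DaKe}, \cite{Kenig-2}, \cite{LanSh} (and \cite{V} for the $L^2$ theory). Your outline is a faithful summary of exactly the argument those references give --- single layer representation, Coifman--McIntosh--Meyer bounds for $(\nabla\mathcal{S}f)^*$, Rellich-identity invertibility of $-\tfrac12 I + K^*$ on the mean-zero subspace of $L^2$, and the Dahlberg--Kenig atomic/interpolation extension to $1<p<2+\n$ --- so it matches the proof the paper is implicitly invoking rather than departing from it.
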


In each theorem the solution(s) admit an integral representation in terms of layer potential operators, which we omit for brevity.

\begin{rem}
The $p$-range 
in each theorem above is maximal in the Lipschitz category. See, for instance,  \cite[Remark 2.1.17]{Kenig-2}.
\end{rem}

\subsection{The Dirichlet problem for $\deebar$: proof of Theorem \ref{T:D-Dbar}}\label{DirichletSection}
For the sake of completeness we recall the proof of Cauchy's theorem for $h^1(\bndry\domain)$. Namely,

\begin{lem}\label{L:CauhyThmHp}
Let $\domain$ be a bounded Lipschitz domain. Then 
\begin{equation}\label{E:CTH1}
\int\limits_{\bndry\domain} f(\zeta)\, d\zeta = 0\qquad \text{for any}\quad  f\in h^1(\bndry\domain).
\end{equation}
\end{lem}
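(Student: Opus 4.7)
The plan is to reduce Cauchy's theorem on the Lipschitz boundary $\bndry\domain$ to the classical Cauchy theorem on the smooth approximating subdomains $\{\domain_k\}$ furnished by the Ne\v{c}as exhaustion of Lemma \ref{L:NecasExhaustion}, and then pass to the limit by dominated convergence, using the fact that $F^\ast \in L^1(\bndry\domain,\sigma)$ is the appropriate dominating function. First I would write $f = \dot F$ for some $F\in \Hs^1(\domain)$, which exists by definition of $h^1(\bndry\domain)$.

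For each $k$, the domain $\domain_k$ is smooth and compactly contained in $\domain$, so $F$ is holomorphic in a neighborhood of $\overline{\domain_k}$ and the classical Cauchy theorem gives
\[
0 \;=\; \int_{\bndry\domain_k} F(\eta_k)\,d\eta_k \;=\; \int_{\bndry\domain_k} F(\eta_k)\,T_k(\eta_k)\,d\sigma_k(\eta_k).
\]
Using the Lipschitz diffeomorphism $\Lambda_k:\bndry\domain\to\bndry\domain_k$ and the change-of-variables formula of Lemma \ref{L:NecasExhaustion}(c), this rewrites as
\[
0 \;=\; \int_{\bndry\domain} F\bigl(\Lambda_k(\eta)\bigr)\, T_k\bigl(\Lambda_k(\eta)\bigr)\, w_k(\eta)\,d\sigma(\eta).
\]

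Next I would pass to the limit $k\to\infty$. Pointwise $\sigma$-a.e. on $\bndry\domain$ we have $\Lambda_k(\eta)\in \Gamma(\eta)$ with $\Lambda_k(\eta)\to\eta$, so $F(\Lambda_k(\eta))\to \dot F(\eta) = f(\eta)$ at every point of non-tangential convergence; moreover $w_k\to 1$ and $T_k\circ \Lambda_k \to T$ a.e.\ on $\bndry\domain$ by parts (c) and (d) of Lemma \ref{L:NecasExhaustion}. For the dominating function, since $\Lambda_k(\eta)\in \Gamma(\eta)$ we have
\[
\bigl|F(\Lambda_k(\eta))\,T_k(\Lambda_k(\eta))\,w_k(\eta)\bigr|\;\leq\; M\,F^\ast(\eta),
\]
where $M$ is the uniform upper bound on $w_k$ from Lemma \ref{L:NecasExhaustion}(c), and $F^\ast\in L^1(\bndry\domain,\sigma)$ by hypothesis. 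The Dominated Convergence Theorem then yields
\[
0 \;=\; \int_{\bndry\domain} f(\eta)\,T(\eta)\,d\sigma(\eta) \;=\; \int_{\bndry\domain} f(\zeta)\,d\zeta,
\]
which is \eqref{E:CTH1}.

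The main obstacle is ensuring that all three convergences---of $F\circ\Lambda_k$, of $T_k\circ\Lambda_k$, and of $w_k$---are simultaneously available $\sigma$-a.e.\ on $\bndry\domain$, together with a uniform $L^1$ majorant; once the Ne\v{c}as machinery and the non-tangential maximal control $F^\ast\in L^1$ are in place this is routine. No appeal to the Jerison--Kenig identification \eqref{E:JK} is needed for this argument, which is why the statement holds at the endpoint $p=1$.
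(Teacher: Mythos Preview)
Your argument is correct and follows essentially the same route as the paper's proof: write $f=\dot F$ with $F\in\Hs^1(\domain)$, apply the classical Cauchy theorem on each smooth approximating domain $\domain_k$, pull back to $\bndry\domain$ via the change-of-variables formula from Lemma \ref{L:NecasExhaustion}(c), and pass to the limit by dominated convergence with majorant $M F^\ast\in L^1(\bndry\domain,\sigma)$. The only cosmetic difference is that you write $T_k(\Lambda_k(\eta))$ where the paper abbreviates this as $T_k(\zeta)$ (viewing $T_k$ as already pulled back to $\bndry\domain$), but the content is identical.
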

\begin{proof}
Let $\{\domain_k\}$ be a Ne\v{c}as exhaustion of $\domain$ and take $f\in h^1(\bndry\domain)$. By definition of $h^1(\bndry\domain)$, there exists $F\in\Hs^1(\domain)$ such that $f=\dot{F}$. For each $k$, $\domain_k$ is compactly contained in $\domain$ with smooth boundary and so by the classical Cauchy Theorem we have 
\[\int\limits_{\Lambda_k(\bndry\domain)} F(\zeta_k)d\zeta_k=0,\]
where $\Lambda_k$ is the diffeomorphism described in Lemma \ref{L:NecasExhaustion} part (a). Replacing $d\zeta_k$ by $T_k(\zeta_k)d\sigma_k(\zeta_k)$ in the above integral and making use of the change-of-variables formula Lemma \ref{L:NecasExhaustion} part (c),
  we have
$$\int\limits_{\bndry\domain}F(\Lambda_k(\zeta))T_k(\zeta)w_k(\zeta) d\sigma(\zeta) =  \int\limits_{\Lambda_k(\bndry\domain)} F(\zeta_k)T_k(\zeta_k) d\sigma_k(\zeta_k) = \int\limits_{\Lambda_k(\bndry\domain)} \!\!\! F(\zeta_k) d\zeta_k =0. $$
Passing $k\rightarrow \infty$ in the above and applying the dominated convergence theorem with the dominating function $MF^*\in L^{1}(\bndry\domain,\sigma)$ (here $M$ is a uniform bound on the $w_k$'s see Lemma \ref{L:NecasExhaustion}), we obtain (\ref{E:CTH1}).
\end{proof}

\begin{lem}\label{LaplacianLemma}
Let $\domain$ be a bounded simply connected Lipschitz domain and  $1<p<\infty$. If $F \in \mathcal H^p(\domain)$,
then
\begin{equation*}\label{T:Ddbar-hp:eq2_0}
\|F^*\|_{L^p(\bndry\domain,\sigma)} \approx \|\dot F\|_{L^p(\bndry\domain, \sigma)}.
\end{equation*}
\end{lem}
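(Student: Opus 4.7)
The direction $\|\dot F\|_{L^p(\bndry\domain,\sigma)}\le \|F^*\|_{L^p(\bndry\domain,\sigma)}$ is immediate from the pointwise bound $|\dot F(\zeta)|\le F^*(\zeta)$, which holds at every $\zeta\in\bndry\domain$ where the non-tangential limit exists, i.e.\ $\sigma$-a.e.

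For the reverse inequality, my plan is to combine the classical subharmonic majorization for holomorphic functions with the Lipschitz Dirichlet theory from Theorem~\ref{T:DL}. Let $\delta=\delta(\domain)>0$ be the exponent gap in that theorem and fix $q\in (0,\,p/(2-\delta))$, so that $p/q>2-\delta$. Since $F$ is holomorphic, $|F|^{q}$ is subharmonic on $\domain$ for every $q>0$, and $|\dot F|^{q}\in L^{p/q}(\bndry\domain,\sigma)$ with $\bigl\||\dot F|^{q}\bigr\|_{L^{p/q}}=\|\dot F\|_{L^p}^{q}$. Theorem~\ref{T:DL} then produces a unique harmonic $u$ on $\domain$ with non-tangential limit $\dot u=|\dot F|^{q}$ $\sigma$-a.e.\ on $\bndry\domain$ and $\|u^{*}\|_{L^{p/q}}\lesssim \|\dot F\|_{L^p}^{q}$.

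The core step is the pointwise majorization $|F(z)|^{q}\le u(z)$ on $\domain$. Granted this, $(F^{*}(\zeta))^{q}=\sup_{z\in\Gamma(\zeta)}|F(z)|^{q}\le u^{*}(\zeta)$ on $\bndry\domain$, hence
\begin{equation*}
\|F^{*}\|_{L^p}^{q}=\|(F^{*})^{q}\|_{L^{p/q}}\le \|u^{*}\|_{L^{p/q}}\lesssim \|\dot F\|_{L^p}^{q},
\end{equation*}
and extracting $q$-th roots gives the desired inequality. To prove the majorization I would fix $z\in\domain$, choose a Ne\v{c}as exhaustion $\{\domain_k\}$ from Lemma~\ref{L:NecasExhaustion} with $z\in\domain_k$ for all large $k$, and apply the sub-mean value property on the smooth subdomain $\domain_k$:
\begin{equation*}
|F(z)|^{q}\le \int\limits_{\bndry\domain_k}P_{\domain_k}(z,\,\cdot\,)\,|F|^{q}\,d\sigma_k,
\end{equation*}
where $P_{\domain_k}$ is the Poisson kernel of $\domain_k$ at $z$. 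I would then let $k\to\infty$ and identify the limit of the right-hand side as $u(z)$.

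The main technical obstacle is precisely this limit. On Lipschitz $\domain$, the convergence of $|F|^{q}$ along $\bndry\domain_k$ to $|\dot F|^{q}$ is only non-tangential and $\sigma$-a.e., and the Poisson kernels $P_{\domain_k}$ do not converge uniformly to a candidate on $\domain$. I would handle this by first using the change-of-variables formula in Lemma~\ref{L:NecasExhaustion}\emph{(c)} to pull the boundary integral back to $\bndry\domain$ via $\Lambda_k$, then applying dominated convergence with dominating function $M(F^{*})^{q}\in L^{p/q}(\bndry\domain,\sigma)$ (using the Jacobian bounds $w_k\in[m,M]$) and the pointwise convergence $F\circ\Lambda_k\to\dot F$ and $w_k\to 1$ $\sigma$-a.e.\ from Lemma~\ref{L:NecasExhaustion}. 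The stability of the Dirichlet solution for the Laplacian at exponent $p/q>2-\delta$, guaranteed by Theorem~\ref{T:DL}, then lets me identify the limit as $u(z)$ and close the argument.
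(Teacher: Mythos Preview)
Your strategy is genuinely different from the paper's. The paper splits the range: for $p>2-\delta$ it applies Theorem~\ref{T:DL} directly to the harmonic function $F$; for $1<p<2$ it takes a holomorphic antiderivative $G$ of $F$ (available because $\domain$ is simply connected) and observes that $G$ solves the Neumann problem \eqref{E:NL} with datum $v:=-iT\dot F$, the compatibility condition $\int_{\bndry\domain}v\,d\sigma=0$ being precisely Cauchy's theorem (Lemma~\ref{L:CauhyThmHp}). Theorem~\ref{T:NL} then gives $\|(\nabla G)^*\|_p\lesssim\|\dot F\|_p$, and since $(\nabla G)^*\approx(G')^*=F^*$ the estimate follows. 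So the paper uses both the Dirichlet and the Neumann theory, with simple connectivity entering through the antiderivative and through Cauchy's theorem; your approach would use only the Dirichlet theory via the classical power trick.

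However, there is a real gap in your limiting argument for the majorization $|F|^q\le u$. After the change of variables the integrand is $P_{\domain_k}(z,\Lambda_k(\eta))\,|F(\Lambda_k(\eta))|^q\,w_k(\eta)$, and dominated convergence requires both a uniform-in-$k$ bound on $P_{\domain_k}(z,\Lambda_k(\eta))$ and its pointwise convergence to $P_\domain(z,\eta)$. Neither is supplied by Lemma~\ref{L:NecasExhaustion}, and Theorem~\ref{T:DL} gives existence, uniqueness and estimates on the fixed domain $\domain$, not stability of Poisson kernels along the exhaustion. Harmonic measures $\omega_{\domain_k}^z$ converge to $\omega_\domain^z$ only weak-$*$, which by itself does not pair with integrands controlled merely in $L^{p/q}$; and on Lipschitz domains the Poisson kernel at a fixed pole need not be bounded, so no obvious dominating function is available. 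A clean way to close your argument is to bypass Poisson kernels and instead invoke a subharmonic maximum principle with nontangential $L^r$ control: with $u\ge 0$ (nonnegative data), $v:=|F|^q-u$ is subharmonic on $\domain$, $\dot v=0$ $\sigma$-a.e., and $v_+^*\le (F^*)^q\in L^{p/q}$, which forces $v\le 0$. That principle is what underlies the uniqueness in Theorem~\ref{T:DL}, but it is not stated in the paper, so you would need to prove or cite it separately.
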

\begin{proof} The inequality: $\|F^*\|_{L^p(\bndry\domain,\sigma)} \geq \|\dot F\|_{L^p(\bndry\domain, \sigma)}$ is immediate from the definitions, thus we only need to show that $\|F^*\|_{L^p(\bndry\domain,\sigma)} \lesssim \|\dot F\|_{L^p(\bndry\domain, \sigma)}$ for any $1<p<\infty$.
\vskip0.1in
If $2-\delta(\domain)< p < \infty$ the desired inequality follows at once from the Dirichlet problem for harmonic functions (Theorem \ref{T:DL}) because $F\in \mathcal H^p(\domain)$ solves
\eqref{E:DL} with datum $u:= \dot{F}$. We next suppose that $1<p<2$ and we  let $G\in \vartheta(\domain)$ be any holomorphic antiderivative of $F$ (which exists since $F$ is holomorphic in $\domain$ which is simply connected); that is
$$
F(z) = G'(z),\quad z\in \domain.
$$
It follows that $G'\in \mathcal H^p(\domain)$ with
\begin{equation}\label{E:one}
|(\nabla G)^*(\zeta)|\approx  (G')^*(\zeta) = F^*(\zeta), \quad \text{and}\quad \dot{(G')}(\zeta) = F(\zeta)\quad \sigma-\text{a.e.}\ \zeta\in\bndry\domain\, .
\end{equation}
Hence
\begin{equation*}\label{E:cplx-not}
\frac{\partial G}{\partial n}(\zeta) = 
-i T(\zeta)\dot{F}(\zeta) \quad \text{for}\ \sigma - \text{a.e.}\ \zeta\in \bndry\domain,
\end{equation*}
see  \eqref{E:normal-deriv}.
Thus $G$ solves the Neumann problem for harmonic functions \eqref{E:NL}
with datum
$$
v(\zeta): = -iT(\zeta)\dot F(\zeta) \in L^p(\bndry\domain,\sigma)
$$
(note that such $v$ satisfies the compatibility condition:
$$
\int\limits_{bD}v (\zeta)d\sigma(\zeta) =0
$$
on account of Lemma \ref{L:CauhyThmHp}.)
Theorem \ref{T:NL}
 now ensures that   $\|(\nabla G)^*\|_p\lesssim \|\dot F\|_p$, which proves the desired inequality by way of \eqref{E:one}.

\end{proof}

\noindent{\em Proof of Theorem \ref{T:D-Dbar}}: 
Let $p>0$ and $f\in L^p(\bndry\domain, \sigma)$. 
We first show
that \eqref{E:D-Dbar} is solvable if, and only if, $f\in h^p(\bndry\domain)$. To this end,
suppose that $F$ is a solution of \eqref{E:D-Dbar}
with datum $f$:
then it is immediate from \eqref{E:D-Dbar}, 
\eqref{D:hp} and \eqref{D:Hp} that $F\in \Hs^p(\domain)$ and $f\in h^p(\bndry\domain)$. Conversely, if $f\in h^p(\bndry\domain)$, then $f = \dot F$ for some $F\in \Hs^p(\domain)$ and it follows from \eqref{D:Hp} that $F$ solves \eqref{E:D-Dbar}. Now
uniqueness for any $p>0$ follows from \cite[Theorem 10.3]{Duren}.
 Moreover, if $p\geq 1$ and $F\in \Hs^p(\domain)$ is the solution of \eqref{E:D-Dbar} with datum $f\in h^p(\bndry\domain)$, then $F = \mathbf{C}_Df$ by Cauchy Formula for $\Hs^1(\domain)$ \cite[Theorem 10.4]{Duren}).
Finally, if $1<p<\infty$ then  $\|F^*\|_p\lesssim \|f\|_p$ by Lemma \ref{LaplacianLemma}.
\qed

\section{Holomorphic Sobolev-Hardy spaces}\label{SobolevHardySection}
Here we highlight the main features of $\Hs^{1,p}(\domain)$. It is worthwhile pointing out that
several of the results in this section do not require $\domain$ to be simply connected: indeed, 
the definitions of holomorphic Hardy space $\Hs^p(\domain)$ and of Sobolev-Hardy space $\Hs^{1,p}(\domain)$, see \eqref{D:Hp} and \eqref{D:H1q}, are meaningful for any bounded Lipshitz domain
 $\domain$ (whether or not simply connected). Furthermore, since Lipschitz domains are local epigraphs, any bounded Lipschitz domain must be finitely connected. Hence, an elementary localization argument shows that any $F\in\Hs^p(\domain)$ has a nontangential limit $\dot{F}$ defined $\sigma$-a.e. on $\bndry\domain$ which also lies in $L^p(\bndry\domain,\sigma)$ for any bounded Lipschitz domain $\domain$.

\begin{prop}\label{HSsubsetHardy}
Let $\domain\subset\mathbb C$ be a bounded Lipschitz domain and let $\alpha\in\domain$. Then
for any $G\in \mathcal H^{1, p}(\domain)$ and $1\leq p \leq \infty$, we have
  \begin{equation}\label{E:incl}
    G^*(\zeta) \lesssim  (G')^*(\zeta)+ \|\dot{(G')}\|_{L^p(bD,\sigma)} + |G(\alpha)|\quad \text{for any}\ \zeta\in\bndry\domain\, .
    \end{equation}
 \end{prop}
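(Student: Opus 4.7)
The plan is to exploit the identity
$G(z) = G(\alpha) + \int_\alpha^z G'(w)\, dw$
along a carefully chosen path from $\alpha$ to $z$. For a given $\zeta\in\bndry\domain$ and $z\in\Gamma(\zeta)$, I will decompose the path into two pieces: an ``interior'' piece $\gamma_1$ that stays away from $\bndry\domain$ (so $G'$ is controlled by the boundary data of $G'$ via Cauchy's formula) and a ``nontangential'' piece $\gamma_2\subset \Gamma(\zeta)$ (so $G'$ is controlled by $(G')^*(\zeta)$). Taking absolute values and using $|G(z)|\le |G(\alpha)|+\int_{\gamma_1}|G'|+\int_{\gamma_2}|G'|$ would then give the required bound after taking the supremum over $z\in\Gamma(\zeta)$.

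Interior control. Since $\bndry\domain$ has finite length, H\"older gives $G'\in \Hs^p(\domain)\subset \Hs^1(\domain)$ for every $p\in[1,\infty]$, so Theorem \ref{T:D-Dbar} applied to $G'$ yields
$$G'(w)=\CintD \dot{(G')}(w)=\frac{1}{2\pi i}\int_{\bndry\domain}\frac{\dot{(G')}(\eta)}{\eta-w}\,d\eta,\quad w\in\domain.$$
Consequently, if $w$ stays in a compact set with $\dist(w,\bndry\domain)\ge c_0>0$, then
$$|G'(w)|\le \frac{\sigma(\bndry\domain)^{1-1/p}}{2\pi c_0}\,\|\dot{(G')}\|_{L^p(\bndry\domain,\sigma)}.$$

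Path construction. Using Definition \ref{de}, pick a finite cover of $\bndry\domain$ by coordinate rectangles $\{e^{-i\theta_j}R_j\}_{j=1}^m$ with associated Lipschitz graphs. For each $\zeta\in\bndry\domain\cap e^{-i\theta_j}R_j$, the definition of the regular cone $\Gamma(\zeta)$ (Definition \ref{D:ap}) gives a translated fixed cone $e^{-i\theta_j}\alpha+\zeta\subset \Gamma(\zeta)$. Inside this translated cone I will select a ``reference point'' $z_0(\zeta):=\zeta + e^{-i\theta_j}w_j^0$ where $w_j^0$ is a fixed vector in the interior of $\alpha$ chosen so that $\dist(z_0(\zeta),\bndry\domain)\ge c_0$, uniformly in $\zeta$. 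The finite-cover structure guarantees that the set $\{z_0(\zeta):\zeta\in\bndry\domain\}$ is contained in a compact $K_0\subset\domain$. Since $\domain$ is open and connected, $\{\alpha\}\cup K_0$ can be enclosed in a compact connected $\widetilde K\subset\domain$ (e.g.\ a finite chain of closed disks contained in $\domain$), on which the intrinsic path distance is bounded by some $L=L(\widetilde K)$; in particular, each $z_0(\zeta)$ can be joined to $\alpha$ by a path $\gamma_1(\zeta)\subset\widetilde K$ of length at most $L$, and $\widetilde K$ can be arranged to satisfy $\dist(\widetilde K,\bndry\domain)\ge c_0$. For the second piece, I take $\gamma_2(\zeta)$ to be a path from $z_0(\zeta)$ to $z$ lying inside $\Gamma(\zeta)$; since $\Gamma(\zeta)$ is a truncated cone that is comparable to $e^{-i\theta_j}\beta+\zeta$, such a path exists with length bounded uniformly by the (fixed) diameter of the cone, say $D_0$.

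Assembly. Combining the bounds yields
\begin{align*}
|G(z)|&\le |G(\alpha)|+\int_{\gamma_1(\zeta)}|G'(w)|\,|dw|+\int_{\gamma_2(\zeta)}|G'(w)|\,|dw|\\
&\le |G(\alpha)|+L\cdot \frac{\sigma(\bndry\domain)^{1-1/p}}{2\pi c_0}\|\dot{(G')}\|_{L^p(\bndry\domain,\sigma)}+D_0\,(G')^*(\zeta),
\end{align*}
and taking the supremum over $z\in\Gamma(\zeta)$ delivers \eqref{E:incl}. The main obstacle is the uniform geometric construction in the third paragraph: showing that a single compact set $\widetilde K\subset\domain$ can house all the reference points $z_0(\zeta)$ together with bounded-length connecting paths to $\alpha$. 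This is where the Lipschitz/finite-cover structure, together with the openness and connectedness of $\domain$, is essential.
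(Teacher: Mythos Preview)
Your argument is correct and follows essentially the same strategy as the paper's proof: split a path from $\alpha$ to $z\in\Gamma(\zeta)$ into an interior piece (where $|G'|$ is controlled via the Cauchy formula and H\"older by $\|\dot{(G')}\|_{L^p}$) and a piece inside the cone $\Gamma(\zeta)$ (where $|G'|\le (G')^*(\zeta)$, and the segment stays in $\Gamma(\zeta)$ by convexity). The only real difference is in how the ``interior region'' is produced: the paper takes a single Ne\v{c}as subdomain $\domain_\alpha\Subset\domain$ containing $\alpha$, which automatically intersects every cone $\Gamma(\zeta)$ and comes with a built-in bounded-path lemma (Lemma~\ref{Prop4Lemma}), whereas you build reference points $z_0(\zeta)$ by hand from the finite coordinate-rectangle cover and then invoke connectedness of $\domain$ to join them to $\alpha$ inside a compact $\widetilde K$. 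Both work; the Ne\v{c}as route is a bit cleaner because it sidesteps the uniform geometric bookkeeping you flag in your last paragraph.

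One small citation issue: Proposition~\ref{HSsubsetHardy} is stated for a bounded Lipschitz domain that need not be simply connected, but Theorem~\ref{T:D-Dbar} (which you invoke for the Cauchy representation $G'=\CintD\dot{(G')}$) is stated only for simply connected $\domain$. The Cauchy formula for $\Hs^1$-functions does hold on finitely connected Lipschitz domains as well, so your estimate is fine, but you should justify it directly (e.g.\ via \cite[Theorem~10.4]{Duren} and localization) rather than by citing Theorem~\ref{T:D-Dbar}.
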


 We first prove the following   
\begin{lem}\label{Prop4Lemma}
Let $\domain$ be a bounded domain with smooth boundary. Then for any  $z, w\in \overline{\domain}$ there exists a piecewise smooth path $\gamma_w^z  \subset \overline{\domain}$ that joins $z$ and $w$, whose length does not exceed $\mathrm{diam}(\domain)+\sigma(\bndry\domain)$.
\end{lem}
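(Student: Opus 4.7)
The natural candidate is the straight line segment $L:=[z,w]$, whose length is at most $\mathrm{diam}(\domain)$. If $L\subset\overline\domain$ I take $\gamma_w^z:=L$ and there is nothing to prove; otherwise I build $\gamma_w^z$ by re-routing $L$ along $\bndry\domain$ wherever it exits $\overline\domain$.

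Because $\bndry\domain$ is smooth and $\domain$ is bounded, $\bndry\domain$ is a finite disjoint union of smooth Jordan curves $C_1,\dots,C_n$, and every connected component of $\mathbb C\setminus\overline\domain$ is bounded by exactly one of them; in particular, if $L$ exits $\overline\domain$ at some point $p$, re-entry into $\overline\domain$ from the same excursion must occur through the same component as $p$. Starting at $z$ and scanning along $L$ toward $w$, I would proceed as follows: at the first point $p$ at which $L$ exits $\overline\domain$, say across component $C_j$, I detour along $C_j$ (choosing either of the two arcs joining $p$ to the \emph{last} point $q\in C_j$ at which $L$ re-enters $\overline\domain$ across $C_j$), and then resume along $L$ from $q$. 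By the maximality of $q$ no further excursion of $L$ across $C_j$ can occur past $q$, so $C_j$ is visited at most once throughout the construction; consequently the total length of all boundary detours is bounded by $\sum_j\sigma(C_j)=\sigma(\bndry\domain)$.

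The resulting $\gamma_w^z$ is a concatenation of straight sub-segments of $L$ together with smooth arcs on $\bndry\domain$, hence is piecewise smooth. It is contained in $\overline\domain$ because each straight piece runs from a re-entry point to a subsequent exit point (so lies in $\overline\domain$), while every arc of $\bndry\domain$ trivially lies in $\overline\domain$. Writing out the length, the straight pieces sum to $|z-w|$ minus the lengths of the skipped sub-intervals, while the arcs contribute at most $\sigma(\bndry\domain)$; therefore
\[
\mathrm{length}(\gamma_w^z)\ \le\ |z-w|+\sigma(\bndry\domain)\ \le\ \mathrm{diam}(\domain)+\sigma(\bndry\domain),
\]
which is the desired estimate. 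The main obstacle, such as it is, is the combinatorial bookkeeping of excursions across multiple components and the possibility that $L$ meets $\bndry\domain$ non-transversally so that \emph{a priori} the intersection set is not finite; however, the choice to always detour to the \emph{last} re-entry on each crossed component side-steps any nesting or interleaving difficulties between different components, while a small perturbation of $z$ or $w$ (followed by a limiting argument) takes care of the transversality issue, so I expect no serious technical hurdle.
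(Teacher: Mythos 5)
Your construction is exactly the one used in the paper: follow the segment $[z,w]$ and reroute along $\bndry\domain$ over each excursion outside $\overline\domain$, bounding the straight pieces by $\mathrm{diam}(\domain)$ and the boundary detours by $\sigma(\bndry\domain)$. Your extra bookkeeping (one detour per boundary component via the last re-entry point, which in fact also disposes of the finiteness/transversality worry without any perturbation) only makes the paper's brief argument more precise, so the proposal is correct and essentially identical in approach.
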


\begin{proof}
For any two points $z,w\in\overline{\domain}$, let $\ell$ denote the oriented line segment starting at $z$ and ending at $w$. Construct a path $\gamma_w^z $ that is equal to the pieces of $\ell$ which are in
$\overline{\domain}$ and, for the pieces of $\ell$ which are outside of $\overline{\domain}$, $\gamma_w^z $ is the portion of $\bndry\domain$ connecting the point where $\ell$ leaves $\overline{\domain}$ to the point where $\ell$ re-enters $\overline{\domain}$. Then $\gamma_w^z $ has all the required properties.
\end{proof}

\begin{proof}[Proof of Proposition \ref{HSsubsetHardy}]
We claim that
for any $\zeta \in\bndry\domain$,
\begin{equation}\label{mm1}
G(z)\lesssim   (G')^*(\zeta)+ \|\dot{(G')}\|_{L^p(\bndry\domain, \sigma)} + |G(\alpha)| \quad \text{for all}\quad z\in \Gamma(\zeta)
\end{equation}
from which \eqref{E:incl} follows on account of the boundedness of $\domain$.
To prove \eqref{mm1}, let $\alpha\in\domain$ be fixed, and let  $\domain_\alpha$ be a member of a Ne\v{c}as exhaustion of $D$ such that $\alpha\in\domain_\alpha$. In particular
$$\domain_\alpha\  \ \text{is smooth, and}\quad  \domain_\alpha \cap \Gamma(\zeta)\ne \emptyset\quad \text{for all}\quad  \zeta \in\bndry\domain.$$ 

Let $\zeta\in\bndry\domain$ and   $z\in \Gamma (\zeta)$. Then  
  we have
\begin{equation*}\label{line}
|G(z)| \le  \int\limits_{\ell_w^z}|G' (\mu)|\ d\sigma(\mu) +|G(w)|
\quad \text{ for any}\ w\in \domain_\alpha\cap \Gamma(\zeta),
\end{equation*}
where $\ell_w^z$ is the line segment joining $w$ to $z$. The above line integral makes sense since  $\ell_w^z\subset \Gamma(\zeta)\subset \domain$  by convexity of the cone $\Gamma(\zeta)$ (see Definition \ref{D:ap}) and $G'\in \vartheta(\domain)$. Moreover, the length $L_1$ of $\ell_w^z $  does not exceed the diameter of $\domain$. Hence
 \begin{equation}\label{E:temp1}
 |G(z)| \le
 L_1\, (G')^{\!*}(\zeta) + |G(w)|\quad \text{for any}\quad z\in \Gamma (\zeta)\quad \text{and}\quad w\in \domain_\alpha\cap \Gamma(\zeta).
 \end{equation}

Next since $\domain_\alpha$ is smooth, for each $w\in \domain_\alpha$ 
by Lemma \ref{Prop4Lemma} there is a path $\gamma_\alpha^w\subset \domain_\alpha$ joining the base point $\alpha$ to $w$, whose length $L_2$ does not exceed some constant dependent only on $\domain$. 
The Cauchy integral formula for $G'$ and H\"older inequality give 
\begin{equation*}
|G(w) - G(\alpha)| \leq
\int\limits_{\gamma_\alpha^w} | G' (\mu)|\, d\sigma(\mu) \leq
\frac{L_2}{2\pi} \sup_{\mu\in \gamma_\alpha^w}\int\limits_{\bndry\domain} \frac{|\dot{(G')}(\eta)|}{|\eta-\mu|}d\sigma(\eta) \leq
\frac{L_2|\bndry\domain|^{1-\frac{1}{p}}}{2\pi \dist(\bndry\domain, \domain_\alpha)}
\|\dot{(G')}\|_{L^p(\bndry\domain,\sigma)},
\end{equation*}
for $1\leq p\leq\infty$, whence
\begin{equation}\label{hh1}
    |G(w)|\lesssim |G(\alpha)| + \|\dot{(G')}\|_{L^p(\bndry\domain,\sigma)} \quad \text{for any}\quad w\in \domain_\alpha\quad \text{and}\quad 1\leq p\leq\infty.
\end{equation}
The conclusion follows from \eqref{E:temp1} and \eqref{hh1}.
\end{proof}

Note that inequality \eqref{mm1} is meaningful also for $G\in \mathcal H^{1, p}(\domain)$ with $0<p<1$, but for $p$ in such range we are unable to prove it.

\begin{cor}\label{HSsubsetHardyC}
 Let $\domain\subset\mathbb C$ be a bounded Lipschitz domain and let $\alpha\in\domain$. For any $G\in \mathcal H^{1, p}(\domain)$ and $1 \leq p \leq \infty$ we have the set inclusion
\begin{equation}\label{E:H1q-in=Hq}
    \mathcal H^{1, p}(\domain)\subseteq \Hs^p(\domain),\quad   1 \leq p \leq \infty.
    \end{equation}   
   If in addition $\domain$ is simply connected and $1<p<\infty$, then
      \begin{equation}\label{E:NTM-bound1}
    \| G^*\|_{L^p(\bndry\domain, \sigma)} \lesssim |G(\alpha)| + \|G'\|_{\mathcal H^{p}(\domain)}.
    \end{equation}
 \end{cor}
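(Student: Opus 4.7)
The proof is a direct deduction from Proposition \ref{HSsubsetHardy} combined with Lemma \ref{LaplacianLemma}. The plan is essentially to integrate (or take essential supremum of) the pointwise estimate \eqref{E:incl} over $\bndry\domain$, and then for the norm bound invoke the holomorphic analog of the Dirichlet estimate proved in Lemma \ref{LaplacianLemma} to replace $\|(G')^*\|_{L^p}$ by $\|G'\|_{\mathcal H^p(\domain)}$.

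First I would establish the set inclusion \eqref{E:H1q-in=Hq}. Fix $G\in \mathcal H^{1,p}(\domain)$; by definition $G'\in \mathcal H^p(\domain)$, so $(G')^*\in L^p(\bndry\domain,\sigma)$ and $\dot{(G')}\in L^p(\bndry\domain,\sigma)$. Applying Proposition \ref{HSsubsetHardy} pointwise gives
$$G^*(\zeta) \lesssim (G')^*(\zeta) + \|\dot{(G')}\|_{L^p(\bndry\domain,\sigma)} + |G(\alpha)|,\quad \zeta\in\bndry\domain.$$
Since $\bndry\domain$ has finite $\sigma$-measure, raising to the $p$-th power and integrating (or taking the essential supremum when $p=\infty$) yields
$$\|G^*\|_{L^p(\bndry\domain,\sigma)} \lesssim \|(G')^*\|_{L^p(\bndry\domain,\sigma)} + \|\dot{(G')}\|_{L^p(\bndry\domain,\sigma)} + |G(\alpha)| < \infty,$$
so $G\in \mathcal H^p(\domain)$. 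This confirms \eqref{E:H1q-in=Hq} for all $1\le p\le\infty$ without any simple-connectivity hypothesis on $\domain$.

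For the quantitative estimate \eqref{E:NTM-bound1}, I would now assume $\domain$ is simply connected and $1<p<\infty$, and apply Lemma \ref{LaplacianLemma} to the function $G'\in\mathcal H^p(\domain)$ to obtain
$$\|(G')^*\|_{L^p(\bndry\domain,\sigma)} \approx \|\dot{(G')}\|_{L^p(\bndry\domain,\sigma)} = \|G'\|_{\mathcal H^p(\domain)}.$$
Plugging this equivalence into the displayed inequality above absorbs both the $\|(G')^*\|_{L^p}$ and $\|\dot{(G')}\|_{L^p}$ terms into $\|G'\|_{\mathcal H^p(\domain)}$, yielding
$$\|G^*\|_{L^p(\bndry\domain,\sigma)} \lesssim |G(\alpha)| + \|G'\|_{\mathcal H^p(\domain)},$$
which is exactly \eqref{E:NTM-bound1}.

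There is essentially no hard step: once Proposition \ref{HSsubsetHardy} and Lemma \ref{LaplacianLemma} are in hand, the corollary is a routine formal deduction. The only points requiring mild care are (i) tracking the dependence of implied constants on $\domain$, $p$, and $\alpha$ (which is automatic under the paper's $\lesssim$ convention), and (ii) remembering that the simple connectivity of $\domain$ enters only through Lemma \ref{LaplacianLemma}, which is why the inclusion \eqref{E:H1q-in=Hq} is asserted for all bounded Lipschitz $\domain$ whereas the norm bound \eqref{E:NTM-bound1} requires $\domain$ to be simply connected.
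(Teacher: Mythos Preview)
Your proposal is correct and follows essentially the same approach as the paper's own proof: apply the pointwise estimate \eqref{E:incl} from Proposition \ref{HSsubsetHardy} to obtain $G^*\in L^p(\bndry\domain,\sigma)$ and hence the inclusion \eqref{E:H1q-in=Hq}, and then for simply connected $\domain$ with $1<p<\infty$ invoke Lemma \ref{LaplacianLemma} applied to $G'$ to replace $\|(G')^*\|_{L^p}$ by $\|G'\|_{\mathcal H^p(\domain)}$. Your remarks on where simple connectivity enters match the paper's reasoning exactly.
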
    

\begin{proof} The definition of $G\in \mathcal H^{1, p}(\domain)$
gives that $G\in \vartheta(\domain)$ and $G'\in \mathcal H^p(\domain)$. Consequently $(G')^*, \dot{(G')}\in L^p(\bndry\domain, \sigma)$. By the pointwise estimate \eqref{E:incl} and the boundedness of $\domain$, we get $G^*\in L^p(\bndry\domain, \sigma)$ and thus $G\in \Hs^p(\domain)$ for any $p\geq 1$, proving \eqref{E:H1q-in=Hq}.

If moreover, $1<p<\infty$, then by Lemma \ref{LaplacianLemma} (which requires $\domain$ to be simply connected) we also have
$$  \| (G')^*\|_{L^p(\bndry\domain, \sigma)} \lesssim  \| \dot{(G')}\|_{L^p(\bndry\domain, \sigma)} = \|G'\|_{\mathcal H^{p}(\domain)}. $$
  Together with   \eqref{E:incl}  this gives
the desired inequality \eqref{E:NTM-bound1}.
\end{proof}
 
It follows from \cite[Theorem 10.3]{Duren} and \eqref{E:H1q-in=Hq}  
that  every element of $\Hs^{1, p}(\domain)$ has a non-tangential limit     
 in   $h^{1, p}(\bndry\domain)$ when $\domain$ is a bounded simply connected Lipschitz domain, see \eqref{D:h1q}. The aforementioned localization argument and  \eqref{E:H1q-in=Hq} give that this is true also for multiply-connected $\domain$. 
  
\vskip0.1in

In view of Corollary \ref{HSsubsetHardyC}, 
for any $\alpha\in \domain$, any $G\in \mathcal H^{1, p}(\domain)$  and any $1 \leq p \leq \infty$ we set
\begin{equation*}\label{E:normSHdef1}
\|G\|_{\mathcal H^{1, p}(\domain)} := |G(\alpha)|+ \|G'\|_{\mathcal H^p(\domain)}.
\end{equation*}
 It is easy to  see that the above defines a family of norms for $\mathcal H^{1, p}(\domain)$ (one norm for each choice of the base point $\alpha\in\domain$). The proof of Proposition \ref{HSsubsetHardy} 
also shows that 
all such norms are equivalent to one other. Indeed, given $\alpha,\tilde{\alpha}\in \domain$ choose $k\in\mathbb N$ so that $\alpha,\tilde{\alpha} \in D_{k}$ (where $D_k$ is a member of a Ne\v{c}as exhaustion for $\domain$), then the proof leading up to \eqref{hh1} shows that  $|G(\alpha)-G(\tilde{\alpha})|\lesssim \|\dot{(G')}\|_{L^p(\bndry\domain,\sigma)}= \|G'\|_{\mathcal H^p(\domain)}$. As such  we shall not specify $\alpha $ in the notation of $ \|\cdot\|_{\mathcal H^{1, p}(\domain)}  $.

\begin{lem}\label{4i} 
Given $\domain$   a bounded simply connected Lipschitz domain and 
$1\leq p \leq \infty$, $\mathcal H^{1, p}(\domain)$ is a Banach space with the norm $\|\cdot\|_{\mathcal H^{1, p}(\domain)} $. 
\end{lem}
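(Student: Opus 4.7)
The plan is to verify the norm axioms and then establish completeness by reducing to the known fact that $\Hs^p(\domain)$ is a Banach space for $p\ge 1$ (which follows, for instance, from the Jerison--Kenig congruence \eqref{E:JK} and the classical theory of Smirnov classes).

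First I would check the norm axioms for $\|G\|_{\mathcal H^{1,p}(\domain)} = |G(\alpha)| + \|G'\|_{\mathcal H^p(\domain)}$. Absolute homogeneity and the triangle inequality are inherited from the corresponding properties of $|\cdot|$ on $\mathbb C$ and of $\|\cdot\|_{\mathcal H^p(\domain)}$. For non-degeneracy: if $\|G\|_{\mathcal H^{1,p}(\domain)}=0$ then $\|G'\|_{\mathcal H^p(\domain)}=0$, so $G'\equiv 0$ on $\domain$; since $\domain$ is connected, $G$ reduces to a constant, and $G(\alpha)=0$ forces $G\equiv 0$.

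Next, for completeness, let $\{G_n\}\subset \mathcal H^{1,p}(\domain)$ be a Cauchy sequence in the stated norm. By the definition of the norm, $\{G_n'\}$ is Cauchy in $\mathcal H^p(\domain)$ and $\{G_n(\alpha)\}$ is Cauchy in $\mathbb C$. Since $\mathcal H^p(\domain)$ is a Banach space for $p\ge 1$, there exists $H\in\mathcal H^p(\domain)$ with $G_n'\to H$ in the $\mathcal H^p(\domain)$-norm; similarly $G_n(\alpha)\to c$ for some $c\in\mathbb C$. Because $\domain$ is simply connected and $H\in\vartheta(\domain)$, the function $H$ admits a unique holomorphic antiderivative $G\in\vartheta(\domain)$ satisfying $G(\alpha)=c$; thus $G'=H\in\mathcal H^p(\domain)$, which places $G\in\mathcal H^{1,p}(\domain)$. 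Finally,
\[
\|G_n-G\|_{\mathcal H^{1,p}(\domain)} \;=\; |G_n(\alpha)-c| \;+\; \|G_n'-H\|_{\mathcal H^p(\domain)} \;\longrightarrow\; 0,
\]
which gives the required convergence in $\mathcal H^{1,p}(\domain)$.

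The main conceptual step is the construction of the candidate limit $G$ as an antiderivative of $H$; this is the only place where simple connectivity of $\domain$ is invoked in an essential way (to guarantee that the primitive exists globally on $\domain$). Once $G$ is in hand, the verification that $G_n\to G$ in $\mathcal H^{1,p}(\domain)$ is immediate from the very shape of the norm, so there is no further obstacle. I would also remark, at the end, that the choice of base point $\alpha$ is immaterial: the equivalence of the norms associated with different base points was already observed in the discussion preceding the lemma, via the estimate $|G(\alpha)-G(\widetilde\alpha)|\lesssim \|G'\|_{\mathcal H^p(\domain)}$ derived in the proof of Proposition \ref{HSsubsetHardy}.
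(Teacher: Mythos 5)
Your proof is correct and follows essentially the same route as the paper: take a Cauchy sequence, use completeness of $\Hs^p(\domain)$ to get a limit $H$ of the derivatives, invoke simple connectivity to produce an antiderivative $G$ of $H$ normalized by $G(\alpha)=\lim_n G_n(\alpha)$, and conclude convergence directly from the form of the norm. The additional verification of the norm axioms and the closing remark on independence of the base point are consistent with the discussion preceding the lemma in the paper.
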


\begin{proof}
 Let $G_n$ be a Cauchy sequence in $\mathcal H^{1, p}(\domain)$. Then $G_n'$ is a Cauchy sequence in $\Hs^p(\domain)$ and thus it converges to a function $F$ in $\Hs^p(\domain)$. Since $\domain$ is simply connected, $F$ has a holomorphic antiderivative $G$. Furthermore, we can choose an antiderivative $G$ such that $G(\alpha)=\lim_{n\to\infty} G_n(\alpha)$. Then $G\in \mathcal H^{1, p}(\domain)$ and $G_n$ converges to $G$ in the $\mathcal H^{1, p}(\domain)$-norm. 
 \end{proof}

 \begin{rem}
 The space $\mathcal H^{1, p}(\domain)$  is not closed in the  $\Hs^{p}(\domain)$-norm, already for $D = \mathbb D$ (the unit disc). To see this, let $F\in \Hs^p(\mathbb{D})$ be such that $F'\notin\Hs^p(\mathbb{D})$ (any $F\in\Hs^p(\mathbb{D})$ that does not continuously extend to $\overline{\mathbb{D}}$ 
  will satisfy this condition; see, for example, \cite[Theorem 3.11]{Duren}). 
 Let $F_n$ be the $n^{th}$ Taylor polynomial of $F$ centered at 0: clearly $F_n\in\Hs^{1,p}(\mathbb{D})$ ($F_n$ is a holomorphic polynomial). Moreover, since $F\in \Hs^p(\mathbb{D})$, $F_n$ converges to $F$ in the $\Hs^{p}(\mathbb D)$-norm.  So
$\displaystyle{F\in\overline{\Hs^{1,p}(\mathbb{D})}^{\Hs^p}\setminus \Hs^{1,p}(\mathbb{D})}$,
proving that $\Hs^{1,p}(\mathbb{D})$ is not closed in the $\Hs^p$-norm.
 \end{rem}

\vskip0.12in
Let $W^{1,p}(\bndry\domain, \sigma)$, $p\geq1$, be the {\bf Sobolev space} consisting of all $L^p(\bndry\domain, \sigma)$ functions whose first order  derivatives along the tangential direction of $\bndry\domain$ exist in the distributional sense and are in $L^p(\bndry\domain, \sigma)$. Here we shall adopt the  definition of the tangential derivative in \cite{V} that is invariant under rotations and translations. To be precise, given a coordinate rectangle $R=(a,b)\times(c,d)$  and Lipschitz boundary function $\phi$ for $\domain$ as in  Definition \ref{de} with angle $\theta=0$ (for simplicity and without loss of generality), 
\begin{itemize}
\item \cite[Definition 1.7]{V} we say that a {\em real-valued}  function $u\in W^{1,p}(\bndry\domain, \sigma)$ if $u\in L^p(\bndry\domain, \sigma)$, and  there exists $f\in L^p(R\cap\bndry\domain,d\sigma)$ such that
\begin{equation}\label{SobolevDefEq1}
-\int_{a}^b u\left( t, \phi(t)\right)\psi'(t)dt = \int_a^b  f\left( t, \phi(t) \right)\psi(t)dt \mbox{ for all } \psi\in C^\infty_c((a,b));
\end{equation}
\item \cite[Definition 1.9]{V} the {\em tangential derivative} $ \dee_T u(\zeta) $ of $u$ at $\sigma$-a.e.
 $\zeta :=   (t, \phi (t) )\in \bndry\domain$ is defined by
$$
    \dee_T u(\zeta) T(\zeta):  =    (f(\zeta), 0) - \langle ( f(\zeta), 0),  n(\zeta)\rangle_{\mathbb R} n(\zeta) = \langle  ( f(\zeta), 0), T(\zeta)\rangle_{\mathbb R}T(\zeta)
    $$
where $n(\zeta)$ is the outer unit normal vector at $\zeta\in\bndry\domain$. Thus,
$$
    \dee_T u(\zeta) :  =    \langle  ( f(\zeta), 0), T(\zeta)\rangle_{\mathbb R}.
    $$
  In our local coordinates the unit tangent vector at $\zeta := (t, \phi (t))$ is
$$T(\zeta) =\frac{  \left( 1, \phi' (t)\right)}{|\left(1,\phi' (t)\right)|},$$
hence
\begin{equation}\label{td}\dee_T u(\zeta) = \frac{ f(\zeta)}{|\left(1,\phi' (t)\right)|},\quad \zeta:=  \left(t, \phi (t)\right),\quad t\in (a, b). \end{equation}

\item A {\em complex-valued} function $u+iv$ is said to be in $ W^{1,p}(\bndry\domain, \sigma)$ if both its real part $u$ and its imaginary part $v$ are in $ W^{1,p}(\bndry\domain, \sigma)$, in which case we let $$ \dee_T (u+iv) (\zeta) = \dee_T u(\zeta) + i \dee_T v(\zeta).$$

\item $W^{1,p}(\bndry\domain, \sigma)$ is a Banach Space with
$$
\|f\|_{W^{1,p}(\bndry\domain, \sigma)} := \|f\|_{L^p(\bndry\domain, \sigma)} + \|\dee_T f\|_{L^p(\bndry\domain, \sigma)} 
$$
\end{itemize}
\medskip

\begin{prop}\label{d} Let $\domain$ be a bounded simply connected Lipschitz domain. The following properties hold:
\begin{enumerate}[(a).]
\item\label{dparta}  $h^{1, p}(\bndry\domain)\subset W^{1, p}(\bndry\domain, \sigma)$ for any $p\geq 1$. That is, for any $G\in \mathcal H^{1, p}(D)$, the tangential derivative $\dee_T\dot G$ is a regular distribution and satisfies
 \begin{equation}\label{E:tan-der}
 \dee_T\dot G (\zeta) = T(\zeta)\dot{(G')}(\zeta)\quad \sigma\text{-a.e.}\ \zeta\in\bndry\domain.
 \end{equation}

\item\label{dpartb} $h^{1, p}(\bndry\domain)\subset C^{1-\frac{1}{p} }(\bndry\domain)$ for any $1\le p<\infty $. More precisely, for any $G\in \mathcal H^{1, p}(D)$ there is a unique $g\in C^{1-\frac{1}{p}} (\bndry\domain)$ such that $\dot{G}(\zeta) = g(\zeta)$ for $\sigma$-a.e. $\zeta\in \bndry\domain$.
\end{enumerate}
\end{prop}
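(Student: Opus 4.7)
The plan for part (a) is to pass to the limit in a classical integration-by-parts identity defined on a Ne\v{c}as exhaustion $\{\domain_k\}$. Fix a coordinate rectangle $R=(a,b)\times(c,d)$ with angle $\theta=0$, so that $\bndry\domain\cap R = \{(t,\phi(t)) : t \in (a,b)\}$ and $\bndry\domain_k\cap R = \{(t, \phi_k(t)): t \in (a,b)\}$ with $\phi_k$ smooth, $\phi_k>\phi$, $\phi_k \to \phi$ uniformly, $\phi_k'\to\phi'$ a.e.\ and in $L^p$, and $\|\phi_k'\|_\infty\le\|\phi'\|_\infty$, by Lemma \ref{L:NecasExhaustion}. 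Since $G$ is holomorphic on a neighborhood of $\overline{\domain_k}$, the Cauchy--Riemann equations yield $\partial_x G = G'$ and $\partial_y G = iG'$. For every $\psi\in C_c^\infty((a,b))$, classical integration by parts along the smooth curve $\bndry\domain_k\cap R$ gives
\begin{equation*}
-\int_a^b G(t,\phi_k(t))\, \psi'(t)\, dt \;=\; \int_a^b G'(t, \phi_k(t))\bigl(1 + i\phi_k'(t)\bigr)\psi(t)\, dt.
\end{equation*}

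Next I would pass $k\to\infty$. Since $\phi_k(t) > \phi(t)$, the point $(t,\phi_k(t))$ sits along the positive $y$-direction from $(t,\phi(t))$, which is interior to the regular cone $\Gamma((t,\phi(t)))$ of Definition \ref{D:ap}; hence $G(t,\phi_k(t))\to\dot G(t,\phi(t))$ and $G'(t,\phi_k(t))\to\dot{(G')}(t,\phi(t))$ for $\sigma$-a.e.\ $t$. Because $G, G'\in\Hs^p(\domain)$ by Corollary \ref{HSsubsetHardyC}, both $G^*$ and $(G')^*$ lie in $L^p(\bndry\domain,\sigma)\subset L^1(\bndry\domain,\sigma)$ and dominate the integrands uniformly in $k$ (together with $|1+i\phi_k'|\le 1+\|\phi'\|_\infty$). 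Dominated convergence produces
\begin{equation*}
-\int_a^b \dot G(t, \phi(t))\psi'(t)\, dt \;=\; \int_a^b \dot{(G')}(t,\phi(t))\bigl(1 + i\phi'(t)\bigr)\psi(t)\, dt,
\end{equation*}
which, split into real and imaginary parts, is exactly \eqref{SobolevDefEq1} with $f(t,\phi(t)) = \dot{(G')}(t,\phi(t))(1+i\phi'(t)) \in L^p$. Since $T(\zeta) = (1+i\phi'(t))/|(1,\phi'(t))|$ in these coordinates, formula \eqref{td} immediately identifies $\partial_T \dot G(\zeta) = T(\zeta)\,\dot{(G')}(\zeta)$. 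Covering $\bndry\domain$ by finitely many coordinate rectangles then yields $h^{1,p}(\bndry\domain)\subset W^{1,p}(\bndry\domain,\sigma)$ together with the formula \eqref{E:tan-der}.

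Part (b) follows from the classical one-dimensional Morrey-type embedding: parametrizing $\bndry\domain$ by arc length, $\dot G$ is a $W^{1,p}$ function on a bounded interval (glued into a loop), and the embedding $W^{1,p}(I)\hookrightarrow C^{1-1/p}(I)$ holds for every $1\le p <\infty$ (reducing to absolute continuity, hence $C^0$, when $p=1$). Thus the $L^p$-class $\dot G$ admits a continuous representative $g\in C^{1-1/p}(\bndry\domain)$; uniqueness is automatic, since any two continuous functions coinciding $\sigma$-a.e.\ on $\bndry\domain$ agree pointwise. The main obstacle is the passage to the limit in part (a): it relies on the vertical approach $(t,\phi_k(t))\to(t,\phi(t))$ being \emph{non-tangential} inside $\Gamma$ and on uniform $L^1$-domination via $G^*$ and $(G')^*$, both of which are secured precisely by the specific features of the Ne\v{c}as exhaustion listed in Lemma \ref{L:NecasExhaustion}.
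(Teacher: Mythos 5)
Your part (a) is essentially the paper's own argument: integration by parts along the N\v{e}cas subdomain in a coordinate rectangle, the Cauchy--Riemann equations to replace $\partial_x G,\ \partial_y G$ by $G',\ iG'$, and two applications of dominated convergence with $G^*,(G')^*\in L^p\subseteq L^1$ as dominating functions; your explicit observation that, for $k$ large, $(t,\phi_k(t))$ lies in the regular cone $\Gamma((t,\phi(t)))$ (vertical axis, $\phi_k\to\phi$ uniformly) is exactly what legitimizes both the a.e.\ convergence and the domination, and matches Definition \ref{D:ap} and Lemma \ref{L:NecasExhaustion}.

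For part (b) you take a genuinely different route. The paper argues directly: it applies the fundamental theorem of calculus along $\Lambda_k(\gamma(\nu_0,\zeta))$, passes to the limit to obtain the explicit representative $g(\zeta)=\dot G(\nu_0)+\int_{\gamma(\nu_0,\zeta)}T\,\dot{(G')}\,d\sigma$, and then gets the H\"older bound from H\"older's inequality plus the chord-arc property. You instead deduce (b) from (a) via the one-dimensional Morrey embedding in the arc-length parametrization; the paper itself records this alternative in Remark \ref{mor} for $1<p<\infty$, and your remark that at $p=1$ the embedding degenerates to absolute continuity (hence continuity) correctly covers the endpoint case. Two details you should make explicit to close your version: (i) the embedding gives H\"older continuity with respect to arc-length distance, so to obtain the seminorm in the paper's definition of $C^{1-\frac1p}(\bndry\domain)$, which uses the Euclidean distance $|x-y|$, you still need the chord-arc estimate $\sigma(\gamma_\xi^\zeta)\lesssim|\zeta-\xi|$ for Lipschitz boundaries, exactly as the paper invokes; (ii) ``gluing into a loop'' is cleanest done through the local charts: in every coordinate rectangle $\dot G$ is a $W^{1,p}$ function of one variable, hence has a continuous (H\"older) representative there, and these representatives agree a.e., hence everywhere, on overlaps, so they patch globally --- this avoids any issue at the seam of a single global parametrization. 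What the paper's direct argument buys is a concrete integral formula for the representative (in the same spirit as the construction of $h_0$ in Section \ref{NeumannSection}) and self-containedness at $p=1$; your route is shorter once (a) is established.
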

Recall that for a closed set $X$ we say that $g\in C^{1-\frac{1}{p}}(X)$ if its H\"{o}lder norm, defined by
\[\|g\|_{C^{1-\frac{1}{p}}(X)}:= \sup_{x\in X}|g(x)|+ \sup_{x,y\in X}\frac{|g(x)-g(y)|}{|x-y|^{1-\frac{1}{p}}},\]
is finite. When $p=1$, it reduces to the continuous function space $C(X)$.

\begin{proof}[Proof of Proposition \ref{d}:]
For part $(a)$, 
let    $\{R_j\}_{j=1}^m$ be a family of coordinate rectangles covering $\bndry\domain$
 as in Lemma \ref{L:NecasExhaustion} part $(b)$; for any member of such family we henceforth omit the label $j$ and write $R =(a, b)\times (c, d)$ as well as  $\theta, \phi, \phi_k$ as in Definition \ref{de} and Lemma \ref{L:NecasExhaustion}. 
 For any  testing function $\psi\in C_c^\infty((a, b))$, we have   
 \begin{equation*}
     \begin{split}
        -\int\limits_{{a}}^{b} \dot{G}\left( t, \phi(t)\right) \psi'(t) dt
         =&  -\lim_{k\rightarrow \infty } \int\limits_{a}^{b} G  \left(  t, \phi_k(t)\right) \psi'(t) dt \\
         =&  \lim_{k\rightarrow \infty } \int\limits_{a}^{b}\left( G_x \left(  t, \phi_k(t)\right) + G_y\left(  t, \phi_k(t)\right)     \left(\phi_k\right)'(t)\right) \psi(t) dt
\end{split}
 \end{equation*}
Here the first equality is due to Lebesgue's dominated convergence theorem (where we use that $G^*\in L^p(\bndry\domain, \sigma ) \subseteq L^1(\bndry\domain, \sigma )$); the second equality follows from integration by parts.  Since $G$ is holomorphic in $\domain$, by the Cauchy-Riemann equation for holomorphic functions, we have for $t\in (a, b)$, 
 $$ G_x \left(  t, \phi_k(t)\right) = G'\left(  t, \phi_k(t)\right)  \ \ \text{and}\ \ G_y \left(  t, \phi_k(t)\right) = iG'\left(  t, \phi_k(t)\right). $$
 Thus 
       \begin{equation*}
     \begin{split}  
     -\int\limits_{{a}}^{b} \dot{G}\left( t, \phi(t)\right) \psi'(t) dt    =&  \lim_{k\rightarrow \infty } \int\limits_{a}^{b} G'\left(  t, \phi_k(t)\right)\left(1+i \left(\phi_k\right)'(t)\right)    \psi(t) dt\\
     =&   \int\limits_{a}^{b}   \dot{(G')} \left(t, \phi (t)\right) 
         \left(1 +i \phi'(t) \right)\psi(t)  dt, 
     \end{split}
 \end{equation*}
where the last equality  is due to   Lemma \ref{L:NecasExhaustion} parts $(b)$  and  again Lebesgue's dominated convergence theorem (where this time we use that $(G')^*\in L^p(\bndry\domain, \sigma ) \subseteq L^1(\bndry\domain, \sigma )$). 

Hence  \eqref{SobolevDefEq1} holds with 
$$f( t, \phi(t)):= \dot{(G')}( t, \phi(t))   \left(1 +i \phi'(t) \right).$$
By \eqref{td} and the fact that in complex notation we have that $T =\frac{  1 +i\phi' }{|1+i\phi'|}$ on $R\cap \bndry\domain$,  it follows that for $\sigma$-a.e. $\zeta=(t+i\phi(t))\in \bndry\domain$,
$$\dee_T \dot G(\zeta) = \frac{\dot{(G')}(\zeta)  \left(1 +i\phi'(t) \right)}{|1+i\phi'(t)|} = T(\zeta) \dot{(G')}(\zeta). $$
Conclusion $(a)$ is thus proved.
\vskip0.2in

For the proof of conclusion $(b)$, we let $\{\domain_k\}$ be a 
Ne\v{c}as exhaustion $\{\domain_k\}$ of $\domain$, see Lemma \ref{L:NecasExhaustion}. Given $G\in\Hs^{1,p}(\domain)$ we fix a point $\nu_0\in\bndry\domain$ where $\dot{G}(\nu_0)$ exists (since $G\in \Hs^{1,p}(\domain)\subseteq\Hs^p(\domain)$, $\dot{G}$ exists $\sigma$-a.e.). Let $\zeta\in\bndry\domain$ be arbitrary and let $\gamma(\nu_0,\zeta)$ be the arc in $\bndry\domain$ from $\nu_0$ to $\zeta$ oriented in the positive direction, see \eqref{E:path-def}. Since $G$ is holomorphic on $\domain$ and $\Lambda_k(\gamma(\nu_0,\zeta))$ is a smooth curve contained in $\domain$, by the Fundamental Theorem of Calculus for line integrals we have
\begin{equation}\label{E:temp}
\begin{split}
G(\Lambda_k(\zeta))&= G(\Lambda_k(\nu_0)) +\!\!\! \int\limits_{\Lambda_k(\gamma(\nu_0,\zeta))} 
\!\!\!\!\!\nabla_{T_k} \,G(\eta_k)\, d\sigma_k(\eta_k)\\ 
& =  G(\Lambda_k(\nu_0)) +\!\!\! \int\limits_{\Lambda_k(\gamma(\nu_0,\zeta))} \!\!\!\!\!T_k(\eta_k)\, G'(\eta_k)\, d\sigma_k(\eta_k) \\ 
&=  G(\Lambda_k(\nu_0)) + \int\limits_{\gamma(\nu_0,\zeta)} (T_k\circ\Lambda_k)(\eta) \,(G'\circ\Lambda_k)(\eta)\, w_k(\eta)\, d\sigma (\eta),
\end{split}
\end{equation}
where the last identity is due to
Lemma \ref{L:NecasExhaustion}. Now for any $\zeta\in\bndry\domain$ the right-hand side converges as $k\to\infty$ by the Lebesgue's dominated convergence theorem (again as $(G')^*\in L^p(\bndry\domain, \sigma ) \subseteq L^1(\bndry\domain, \sigma )$), and by Lemma \ref{L:NecasExhaustion}, to
$$
g(\zeta)\ :=\ \dot{G}(\nu_0) + \int\limits_{\gamma(\nu_0,\zeta)}\!\! T(\eta) \,\dot{(G')}(\eta)\, d\sigma (\eta).
$$
It follows from the above that for any $\zeta\in\bndry\domain$ the left hand side of \eqref{E:temp} also converges as $k\to \infty$, and in fact
$$
g(\zeta)=\lim\limits_{k\to\infty} G(\Lambda_k(\zeta)) = \dot{G}(\zeta)
\mbox{ for $\sigma$-a.e. $\zeta\in\bndry\domain$,}$$
because $\Lambda_k(\zeta)\in \Gamma(\zeta)$, see Lemma \ref{L:NecasExhaustion}. 
Now H\"older's inequality gives
$$|g(\zeta)-g(\xi)|
 \leq \|\dot{(G')}\|_{L^p(\bndry\domain,\sigma)}\left(\sigma(\gamma_{\xi}^\zeta) \right)^{\!\!{1-\frac{1}{p}}}\,.
 $$
 But
 $$
 \sigma(\gamma_{\xi}^\zeta)
  \lesssim
  |\zeta-\xi|\quad \text{for any}\ \zeta, \xi\in\bndry\domain$$
since Lipschitz domains are chord-arc, and  the implied constant depends only on $\domain$  (see, for example, Section 7.4 in \cite{Pom}).
 Hence $g\in C^{1-\frac{1}{p}}(\bndry\domain)$; the proof is concluded.
\end{proof}

\begin{rem}\label{mor}
 In the case when $1<p<\infty$, Proposition \ref{d} $(b)$ can be directly obtained by incorporating Proposition \ref{d} {\it (a)} with the Sobolev embedding theorem (\cite[Theorem 3.6.6]{Mor}). In fact, as a result of this, for any $g\in h^{1, p}(\bndry\domain)$, one has $g\in W^{1, p}(\bndry\domain, \sigma) \subset C^{1-\frac{1}{p}}(b\domain)$ along with the following estimate
 $$\|g\|_{C^{1-\frac{1}{p}}(b\domain) } \lesssim \|g\|_{W^{1, p}(\bndry\domain, \sigma) }.$$
\end{rem}

\section{The Regularity problem for $\deebar$: proof of Theorem \ref{T:D-Dbar-reg}}
\label{S:T:D-Dbar-reg}

Let $f\in W^{1,p}(\bndry\domain, \sigma)$. 
We first show
that \eqref{E:D-Dbar-reg} is solvable if, and only if, $f\in h^{1,p}(\bndry\domain)$. To this end,
suppose that $F$ is a solution of \eqref{E:D-Dbar-reg}
with datum $f$: then $F$ is holomorphic, and it is immediate from \eqref{E:D-Dbar-reg}, 
    \eqref{D:Hp} and \eqref{D:H1q} that $F'\in \Hs^p(\domain)$ which means that $F\in\Hs^{1,p}(\domain)$, and therefore $f\in h^{1,p}(\bndry\domain)$, see \eqref{D:h1q}. Conversely, if $f\in h^{1,p}(\bndry\domain)$, then $f = \dot F$ for some $F\in \Hs^{1,p}(\domain)$  and \eqref{D:H1q} gives that $F$ solves \eqref{E:D-Dbar-reg}. Uniqueness follows from Theorem \ref{T:D-Dbar} (since $\Hs^{1,p}(\domain)\subset\Hs^p(\domain)$).

 If $p> 1$ and $F\in \Hs^{1,p}(\domain)$ is the solution of \eqref{E:D-Dbar-reg} with datum $f\in h^{1,p}(\bndry\domain)$, then  $F=\mathbf{C}_D f$ by Theorem \ref{T:D-Dbar}. 
Make use of Remark \ref{mor} and the boundedness of the operator $\mathbf{C}_D $ from $C^{1-\frac{1}{p}}(b\domain) $ to $C^{1-\frac{1}{p}}(\overline \domain) $ (see  \cite[Theorem 3.3]{Mclean}). Namely, the holomorphic function $\mathbf{C}_D f$ extends to $F\in C^{1-\frac{1}{p}}(\overline \domain)$ with
 \begin{equation*}\label{E:temp-5}
 \|F\|_{C^{1-\frac{1}{p}}( \overline\domain)}\lesssim \|f\|_{C^{1-\frac{1}{p}}(\bndry\domain)}\lesssim \|f\|_{W^{1, p}(\bndry\domain, \sigma)}.
 \end{equation*}
  In particular, 
 $$ \|F^*\|_{L^\infty(\bndry\domain, \sigma)}\lesssim \|f\|_{W^{1, p}(\bndry\domain, \sigma)}. $$
 
 On the other hand, we also have that
 $F'$ solves \eqref{E:D-Dbar} with datum 
 $$\overline{T}(\zeta)\,\dee_T f(\zeta)$$
 which belongs to $h^p(\bndry\domain)$ by Proposition \ref{d}$(a)$, hence
 \begin{equation*}\label{E:temp-6}
 \|(F')^*\|_{L^p(\bndry\domain, \sigma)}\ \lesssim\ \|\overline{T}\,\partial_Tf\|_{L^p(\bndry\domain, \sigma)} \ =\ \|\partial_Tf\|_{L^p(\bndry\domain, \sigma)}
 \end{equation*}
again by Theorem \ref{T:D-Dbar}.
\qed

\section{The Neumann problem for $\deebar$: proof of Theorem \ref{T:NDbar}}\label{NeumannSection}

Let $p\geq 1$ and $g\in L^p(\bndry\domain, \sigma)$.
We first show that the holomorphic Neumann problem \eqref{NdbarIntro}
 is solvable if and only if $g\in\neu^{p}(\bndry\domain)$. To see this, suppose that $G$ is a solution to \eqref{NdbarIntro} with datum $g$: then it is immediate from \eqref{NdbarIntro} and \eqref{D:H1q}
 that  $G\in \Hs^{1, p}(\domain)$, hence $\dot{G}\in h^{1,p}(\bndry\domain)$. The latter together with 
 \eqref{NdbarIntro} again, \eqref{E:normal-deriv-hol},  \eqref{E:tan-der} and \eqref{D:Ndata} give that
 $g\in\neu^{p}(\bndry\domain)$. Conversely, if $g\in\neu^{p}(\bndry\domain)$ then $g = -i\partial_T\dot{G}$ for some $G\in \Hs^{1, p}(\domain)$ and it follows from 
 \eqref{E:normal-deriv-hol}, \eqref{E:tan-der}, \eqref{D:H1q} and \eqref{D:Hp}
  that
 $G$ solves \eqref{NdbarIntro} with datum $g$. Now \cite[Theorem 10.3]{Duren}
   gives that $G'$ is unique, hence $G$ is unique modulo additive constants. Next we note that if $G\in \Hs^{1,p}(\domain)$ is a solution of \eqref{NdbarIntro} with datum $g\in \neu^p(\bndry\domain)$, then $F:= G'\in \Hs^p(\domain)$ is the solution of the holomorphic Dirichlet problem \eqref{E:D-Dbar} with datum $$f(\zeta):=\, 
 i \overline{T}(\zeta) g(\zeta)\, =\, i \overline{T}(\zeta)\frac{\dee G}{\dee n}(\zeta)\, =\, \dot{(G')}(\zeta)$$
 by \eqref{NdbarIntro} and \eqref{E:normal-deriv-hol}. Hence $f\in h^p(\bndry\domain)$ and Theorem \ref{T:D-Dbar} gives
$G' = \mathbf{C}_D (i\overline{T} g)$.

Theorem \ref{T:D-Dbar} also grants that
 \begin{equation*} 
 \|(G')^*\|_{L^p(\bndry\domain, \sigma)}\lesssim 
  \|i\overline{ T}g\|_{L^p(\bndry\domain, \sigma)}=\|g\|_{L^p(\bndry\domain, \sigma)}\qquad 1<p<\infty.
 \end{equation*}
 Furthermore, it is clear from all we did that for any $\alpha\in \domain$, the holomorphic Neumann problem \eqref{NdbarIntro} has a unique solution 
 $G_\alpha\in \Hs^{1,p}_\alpha(\domain)$. 

For the remainder of the proof, we will assume that $1<p<\infty$. Fix one point $\zeta_0\in \bndry\domain$ and define
$$h_0(\zeta) =  \int\limits_{\gamma(\zeta_0, \zeta)}\!\!\!  ig(\eta)d\sigma(\eta) \quad 
\text{for every}\quad 
\zeta\in \bndry\domain.  $$
where $\gamma(\zeta_0, \zeta)$ is as in \eqref{E:path-def}.
Let $\xi\in\bndry\domain$ be a Lebesgue point of $g$. Since $g\in L^p(\bndry\domain,\sigma)\subseteq L^1(\bndry\domain,\sigma)$, Lebesgue points exist $\sigma$-a.e. on $\bndry\domain$. We have for any $\zeta\in\bndry\domain$,
\begin{equation*}
\left|\frac{h_0(\zeta)-h_0(\xi)}{\int_{\gamma(\xi, \zeta)} d\sigma} - ig(\xi)\right| = \left|\frac{\int_{\gamma(\xi, \zeta)} \left(g(\eta)-g(\xi)\right)d\sigma(\eta)}{\int_{\gamma(\xi,\zeta)} d\sigma}\right|.
\end{equation*}
The right-hand side of the above equation tends to $0$ as $\zeta$ approaches $\xi$ along $\bndry\domain$ since $\xi$ is a Lebesgue point of $g$. Thus,
$h_0$ admits a tangential derivative $\sigma$-a.e. and moreover
\begin{equation}\label{neum}
    \partial_T h_0 =ig\, .
\end{equation}
H\" older's inequality gives
$ \|h_0\|_\infty \lesssim \|g\|_p$;
it follows that
 $h_0 \in W^{1, p}(\bndry\domain, \sigma)$
with 
 $$\left\|h_0\right\|_{W^{1, p}(\bndry\domain, \sigma) } \lesssim \|g\|_{L^p(\bndry\domain, \sigma) }  .$$
Furthermore, since $g\in \neu^{p}$, we also have $h_0\in h^{1, p}(\bndry\domain)$ by \eqref{neum} and the definition of $\neu^{p} $, along with the fact that every function in $  W^{1, p}(\bndry\domain, \sigma)$ with vanishing tangential derivative is constant. See, for instance, \cite[Theorem 3.6.5]{Mor}. Now \cite[Theorem 3.3]{Mclean} together with the Sobolev embedding $W^{1, p}(\bndry\domain, \sigma)\subset C^{1-\frac{1}{p}}(\bndry\domain)$ in Remark \ref{mor},
 gives that $\mathbf C_D\left(h_0 \right)\in C^{1-\frac{1}{p} }( \overline{\domain})$ with 
 \begin{equation}\label{hol}
     \|\mathbf C_D\left(h_0 \right)\|_{C^{1-\frac{1}{p} }( \overline{\domain})}  \lesssim \|h_0\|_{C^{1-\frac{1}{p} }(\bndry\domain)}  \lesssim \|h_0\|_{W^{1, p}(\bndry\domain, \sigma) } \lesssim \|g\|_{L^p(\bndry\domain, \sigma) }. 
 \end{equation}
 
 Let $G_\alpha$ be the unique solution to \eqref{NdbarIntro} with datum $g$, hence $G_\alpha (\alpha) =0$ and $ \partial_T\dot{G_\alpha} =ig$   by \eqref{E:normal-deriv-hol} and \eqref{E:tan-der}, and \eqref{neum} along with \cite[Theorem 3.6.5]{Mor} give that $ \dot G_\alpha = h_0 + \lambda_\alpha$ for some constant $\lambda_\alpha$. 
 That is, $G_\alpha$ solves the Regularity problem \eqref{E:D-Dbar-reg} with datum
  $$h_0+\lambda_\alpha \in h^{1, p}(\bndry\domain).$$
  (Note that $\lambda_\alpha\in h^{1, p}(\bndry\domain)$ since $H(z):= \lambda_\alpha z\in \mathcal H^{1, p}(\domain)$, see \eqref{D:h1q} and \eqref{D:Hp}.) 
 \vskip0.1in
 
 Theorem \ref{T:D-Dbar-reg} now gives
 \begin{equation}\label{E:temp10}
 G_\alpha(z)  =\mathbf C_\domain h_0 (z) + \lambda_\alpha,\quad z\in \domain
 \end{equation} 
 (recall that  $\mathbf C_\domain(1)(z) \equiv 1$). Since $G_\alpha(\alpha) =0$, we further have $\lambda_\alpha = -\mathbf C_\domain h_0 (\alpha)$, hence $G_\alpha$ admits the representation \eqref{E:temp11}. 
 In particular,  \eqref{hol} gives that 
     $\lambda_\alpha\lesssim  \|g\|_{L^p(\bndry\domain, \sigma)}$ and that
  $G_\alpha\in C^{1-\frac{1}{p} }(\overline{\domain})  $ with
\begin{equation*}\label{E:bound-alpha-one}
 \|G_\alpha\|_{C^{1-\frac{1}{p} }(\overline{\domain})}\ \lesssim\  \|g\|_{L^p(\bndry\domain, \sigma)}.
 \end{equation*}
 In particular,
\begin{equation*}\label{E:bound-alpha}
\|G_\alpha^*\|_{L^\infty(\bndry\domain, \sigma)}\ \lesssim\  \|g\|_{L^p(\bndry\domain, \sigma)}.  
\end{equation*}
 \qed

\begin{rem}
The above construction does not depend on the choice of $\zeta_0$ in the following sense. If another point $\zeta_1$ were chosen instead with a corresponding function $h_1$, then $h_0$ and $h_1$ would only differ by an additive constant that can be chosen so that
$h_0(\zeta) - h_0(\alpha)=h_1(\zeta) - h_1(\alpha)$ for $\sigma$-a.e. $\zeta\in\bndry\domain$. 
\end{rem}

\section{The Robin problem for $\deebar$}\label{RobinSection}
We begin with an example that
shows that the uniqueness of the holomorphic Robin problem fails in general if the compatibility condition \eqref{E:compint} is dropped.

\begin{example}\label{rr}
Consider the following homogeneous holomorphic Robin problem on $\mathbb D$:
\begin{equation}\label{Rdbare}
     \left\{
      \begin{array}{lcll}
      \bar\partial G &= &0 & \text{in} \ \ \mathbb D;\\ \\
                        \displaystyle\frac{\partial G}{\partial n}(\zeta) - \dot G(\zeta)&= &0 & \text{for}\  \ \zeta\in \bndry\mathbb D\\\\
             (\nabla G)^*\in L^p(\bndry\mathbb D)
      \end{array}
\right.
\end{equation}
Making use of the fact that $ T(\zeta) = i\zeta $ on $b\mathbb D$ 
one can directly verify that  $G(z): =Cz$ solves \eqref{Rdbare} for any constant $C$. Note that $b=-1$ on $b\mathbb D$ with
$$  \int\limits_{b\mathbb D} b(\zeta) d\sigma(\zeta) =-2\pi. $$
\end{example}

\subsection{The smoothing operator $\mathcal T_b$}

For $\zeta, \xi\in \bndry\domain$ with $\zeta\neq \xi$, recall that  $\gamma (\zeta, \xi)$ is  the piece of $\bndry\domain$   joining $\zeta$ to $\xi$ in the positive direction. In particular, we define $\gamma(\zeta, \zeta) =\emptyset$; if $\xi$ approaches $\zeta$ along the positive orientation of $\gamma$,  we denote it by  $\xi\rightarrow \zeta^-$, with  $\gamma(\zeta, \zeta^-) = \bndry\domain$.

\begin{prop}\label{P:Top} Let $\domain$ be a bounded simply connected Lipschitz domain and let $1 < p<\infty$. Assume $b\in L^p(\bndry\domain,\sigma)$ satisfies the compatibility condition \eqref{E:compint}. Then  for the operator $\mathcal T_b$   defined in \eqref{E:auxOp}, we have that
\begin{itemize}
\item[\tt(i.)]\quad $\mathcal T_b$ is bounded: $ L^p(\bndry\domain, \sigma) \to C^{1-\frac{1}{p}}(\bndry\domain)$. Namely
\begin{equation*}\label{E:Tb-bdd-LpA}
\|\mathcal T_b\,r\|_{C^{1-\frac{1}{p}}(\bndry\domain)}\lesssim \|r\|_{L^p(\bndry\domain,\sigma)}.
\end{equation*}

\item[\tt(ii.)]\quad $\mathcal T_b$ is bounded: $L^p(\bndry\domain, \sigma) \to W^{1,p}(\bndry\domain,\sigma)$. Namely
 \begin{equation}\label{E:Tb-bdd-Lp}
 \|\mathcal T_b\,r\|_{L^p(\bndry\domain,\sigma)} + \|\partial_T(\mathcal T_b\,r)\|_{L^p(\bndry\domain,\sigma)}\lesssim \|r\|_{L^p(\bndry\domain,\sigma)}, 
 \end{equation}
 and $\mathcal T_br $ gives the unique solution to  \begin{equation}\label{ode3}
  -i\partial_{T}h(\zeta) + b(\zeta)h(\zeta) = r(\zeta)\ \  \text{for}\ \ \sigma-\text{a.e. } \zeta \in \bndry\domain.
\end{equation}
 \item[\tt(iii.)]\quad  $\mathcal T_b$ takes $\reu^{p}(\bndry\domain) $ into $h^{1,p}(\bndry\domain)$. Hence  $\mathbf C_D\circ \mathcal T_b$ takes $\reu^{p}(\bndry\domain)$ into
 $\mathcal H^{1,p}(\domain)$.
  \end{itemize}
\end{prop}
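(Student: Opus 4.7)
The plan is to view $\mathcal{T}_b r$ as the solution of the first-order ODE on the closed curve $\bndry\domain$
\begin{equation*}
-i\partial_T h(\zeta) + b(\zeta) h(\zeta) = r(\zeta),\qquad \sigma\text{-a.e.\ } \zeta\in\bndry\domain,
\end{equation*}
and derive (i), (ii), (iii) in the reverse order from the stated one. The proof of (ii) will be the centerpiece: everything else follows from it plus results already proved in the paper.

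First I would parametrize $\bndry\domain$ by arclength, $\zeta=\zeta(s)$ for $s\in[0,L]$ with $L=\sigma(\bndry\domain)$, and set $\widetilde{B}(s):=\int_0^s i\,b(\zeta(u))\,du$, extended quasi-periodically by $\widetilde{B}(s+L)=\widetilde{B}(s)+\widetilde{B}(L)$. Rewriting \eqref{E:auxOp} in these coordinates one obtains
\begin{equation*}
\widetilde{\mathcal{T}_b r}(s)=\frac{i\,e^{-\widetilde{B}(s)}}{e^{\widetilde{B}(L)}-1}\int_s^{s+L}\widetilde{r}(u)\,e^{\widetilde{B}(u)}\,du,
\end{equation*}
which makes sense thanks to the compatibility condition \eqref{E:compint}. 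Differentiating in $s$, the derivative of the $e^{-\widetilde{B}(s)}$ factor contributes $-i b\cdot\mathcal{T}_b r$, and the two boundary terms collapse, using the quasi-periodicities of $\widetilde{r}$ and $e^{\widetilde{B}}$, to exactly $i\,\widetilde{r}(s)\,(e^{\widetilde{B}(L)}-1)$ divided by $(e^{\widetilde{B}(L)}-1)$; this yields \eqref{ode3}. Uniqueness on $\bndry\domain$ follows from the same integrating-factor argument: any homogeneous solution equals $C\,e^{-\widetilde{B}(s)}$, and $L$-periodicity together with \eqref{E:compint} forces $C=0$.

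Once \eqref{ode3} is in hand, the bound \eqref{E:Tb-bdd-Lp} is immediate: from (ii) the tangential derivative satisfies
\begin{equation*}
\partial_T(\mathcal{T}_b r)=ib\,\mathcal{T}_b r-ir,
\end{equation*}
so $\|\partial_T(\mathcal{T}_b r)\|_{L^p}\le \|b\|_{L^p}\|\mathcal{T}_b r\|_{L^\infty}+\|r\|_{L^p}$, and the $L^\infty$-piece is controlled by \eqref{sup}. Conclusion (i) is then an immediate consequence of (ii) combined with the Sobolev embedding $W^{1,p}(\bndry\domain,\sigma)\subset C^{1-\frac1p}(\bndry\domain)$ recorded in Remark \ref{mor}. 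For (iii), if $r\in\reu^p_b(\bndry\domain)$ there exists $H\in\Hs^{1,p}(\domain)$ with $r=-i\partial_T\dot H+b\dot H$, so by the uniqueness part of (ii) we must have $\dot H=\mathcal{T}_b r$; since $\dot H\in h^{1,p}(\bndry\domain)$ by definition of the latter, this gives $\mathcal{T}_b r\in h^{1,p}(\bndry\domain)$. Finally, Cauchy's formula in $\Hs^1$ (used in Section \ref{DirichletSection} via \cite[Theorem 10.4]{Duren}) yields $\mathbf C_D\mathcal{T}_b r=\mathbf C_D\dot H=H\in\Hs^{1,p}(\domain)$.

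The main technical obstacle is the differentiation step proving that $\mathcal{T}_b r$ solves \eqref{ode3}: the integrand in \eqref{E:auxOp} depends on $\zeta$ both through the base point of $\gamma(\zeta,\xi)$ and through the variable limit that arises when the closed curve $\bndry\domain$ is cut at $\zeta$, so one must pass to the arclength parametrization and exploit the quasi-periodicity of the integrating factor $e^{\widetilde{B}}$ to reconcile the two contributions. Once this calculation is carried out carefully, and the tangential derivative identity $\partial_T f(\zeta(s))=f'(s)$ in the $W^{1,p}$-sense from Proposition \ref{d}(\ref{dparta}) is invoked to upgrade everything to $\sigma$-a.e.\ statements, the rest of the proof is mechanical.
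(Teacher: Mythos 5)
Your proposal is correct, and its core, part {\tt (ii.)}, is essentially the paper's own argument: arclength parametrization of $\bndry\domain$, the integrating factor $e^{\widetilde B}$, and periodicity combined with \eqref{E:compint} to kill the homogeneous solution. The only difference there is cosmetic: you verify that $\mathcal T_b r$ solves \eqref{ode3} by differentiating the explicit formula and exploiting quasi-periodicity, whereas the paper runs the computation in the other direction, writing the general solution \eqref{so} of the ODE, imposing the periodic boundary condition $h(0)=h(s_0)$ to determine the constant, and then integrating the integrating-factor identity over $[0,s_0]$ to recover exactly the formula \eqref{E:auxOp}; the two computations are equivalent, and your closing remark about needing absolute continuity to justify the a.e.\ differentiation is exactly the care the paper takes via $\Bzj,\Bzj^{-1}\in W^{1,p}$. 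Part {\tt (iii.)} is argued identically (uniqueness forces $\mathcal T_b r=\dot H\in h^{1,p}(\bndry\domain)$), with your added Cauchy-formula step making explicit what the paper leaves implicit. The genuine divergence is part {\tt (i.)}: the paper proves the H\"older bound directly from the kernel, splitting $\bndry\domain$ into $\gamma(\zeta_1,\zeta_2)$ and its complement and using $|e^{z_1}-e^{z_2}|\le e^x|z_1-z_2|$ together with H\"older's inequality to get $|\tilde r(\zeta_1)-\tilde r(\zeta_2)|\lesssim|\zeta_1-\zeta_2|^{1-\frac1p}\|r\|_{L^p(\bndry\domain,\sigma)}$, while you deduce {\tt (i.)} from {\tt (ii.)} and the Sobolev embedding $W^{1,p}(\bndry\domain,\sigma)\subset C^{1-\frac1p}(\bndry\domain)$ recorded in Remark \ref{mor}. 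Your route is shorter and not circular (your $W^{1,p}$ bound uses only \eqref{sup} and the ODE), at the price of invoking Morrey's embedding; the paper's direct estimate is self-contained and exhibits the constant's dependence on $b$ explicitly. One small slip: solving \eqref{ode3} for the tangential derivative gives $\partial_T(\mathcal T_b r)=ir-ib\,\mathcal T_b r$, not $ib\,\mathcal T_b r-ir$; the sign is immaterial for the $L^p$ estimate you draw from it.
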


\vskip0.1in

\begin{proof} 
 We adopt the shorthand
  $$\Bzj(\zeta): = e^{i\int\limits_{\gamma(z, \zeta)} b(\xi) d\sigma(\xi)}\quad \text{for}\quad \zeta\in \bndry\domain;$$ 
  $$ \tilde b_{0}: = e^{i\int\limits_{\bndry\domain} b(\xi) d\sigma(\xi)}\, ,$$ and  $$  {\tilde r}(z) : =  \int\limits_{\bndry\domain} \Bzj(\zeta)r(\zeta)d\sigma(\zeta),\ \ \ z\in \bndry\domain.  $$ Then the compatibility condition \eqref{E:compint} is equivalent to
\begin{equation} \label{co}
    \tilde b_{0} - 1\ne 0.
\end{equation}

Proof of {\tt (i.)}: We only need to show 
\begin{equation}\label{db}
    \|{\tilde r}\|_{C^{1-\frac{1}{p}}(\bndry\domain)}\lesssim \|r\|_{L^p(\bndry\domain, \sigma) }.
\end{equation} 
First, since $ \|\Bzj\|_{L^\infty(\bndry\domain,\sigma)}\le e^{\|b\|_{L^1(\bndry\domain, \sigma)}}\lesssim 1, $
\begin{equation}\label{ll}
    \sup_{\zeta\in \bndry\domain}|{\tilde r}(\zeta)|\lesssim \|r\|_{L^1(\bndry\domain, \sigma)}\lesssim \|r\|_{L^p(\bndry\domain, \sigma)}.
\end{equation}
For fixed $\zeta_1 , \zeta_2 \in \bndry\domain$, the arclength  $\sigma(\gamma(\zeta_1, \zeta_2)) \approx |\zeta_1-\zeta_2|$ by  the Lipschitz property of $\bndry\domain$. Without loss of generality, assume that  $|\zeta_1-\zeta_2|$ is small. For $\sigma$-a.e. $\xi\in \bndry\domain\setminus \gamma(\zeta_1, \zeta_2)$,
$$ \left| e^{i\int\limits_{\gamma(\zeta_1, \xi)} b(\eta) d\sigma(\eta)}- e^{i\int\limits_{\gamma(\zeta_2, \xi)} b(\eta) d\sigma(\eta)}\right| \le e^{\|b\|_{L^1(\bndry\domain, \sigma)} } \left| \int\limits_{\gamma(\zeta_1, \zeta_2)}| b(\eta)| d\sigma(\eta)\right|\lesssim |\zeta_1 -\zeta_2|^{1-\frac{1}{p}}.$$
 Here we used the mean-value theorem  in the first inequality (more precisely, $|e^{z_1}-e^{z_2}|\le e^x|z_1-z_2|$ whenever $|z_1|, |z_2|<x: = \|b\|_{L^1(\bndry\domain, \sigma)}$) and   H\"older inequality  in the second inequality. On the other hand, for $\sigma$-a.e. $\xi\in \gamma(\zeta_1, \zeta_2) $, $$ \left|e^{i\int\limits_{\gamma(\zeta_k, \xi)} b(\eta) d\sigma(\eta)}\right|\le e^{\|b\|_{L^1(\bndry\domain, \sigma)}}\lesssim 1, k=1, 2.$$ Thus by H\"older inequality
\begin{equation*}
    \begin{split}
    |{\tilde r}(\zeta_1)-{\tilde r}(\zeta_2)|\lesssim   &  \int\limits_{\bndry\domain\setminus \gamma(\zeta_1, \zeta_2)}\left| e^{i\int\limits_{\gamma(\zeta_1, \xi)} b(\eta) d\sigma(\eta)}- e^{i\int\limits_{\gamma(\zeta_2, \xi)} b(\eta) d\sigma(\eta)}\right||r(\xi)|d\sigma(\xi) \\
    &+ \int\limits_{\gamma(\zeta_1, \zeta_2)}|r(\xi)|d\sigma(\xi)\\
    \lesssim &|\zeta_1 -\zeta_2|^{1-\frac{1}{p}}\|r\|_{L^p(\bndry\domain, \sigma) }.
    \end{split}
\end{equation*}
Equation \eqref{db} follows from the above inequality and \eqref{ll}.
\vskip0.2in
Proof of {\tt (ii.)}: We first show that  there exists a unique solution to \eqref{ode3}. 
 We shall adopt the arclength variable $s\in [0, s_{0}) $ to parametrize $\bndry\domain$, where $\zeta(0)=\lim_{s\to s_{0}^-} \zeta(s)=z$  for a fixed $z\in \bndry\domain$,   and $|\zeta'(s)|=1$ for a.e. $s\in [0, s_{0})$.
 
  Write $h(s): =h(\zeta(s))$,  and similarly for $T$, $b$, $\Bzj$ and $r$. In particular, for   $s\in [0,s_{0})$, we have $\Bzj(s)=e^{i\int\limits_{0}^s b(s) ds}$. It is immediate to  verify that $\Bzj, \Bzj^{-1}\in W^{1, p}((0, s_0))$. We further continuously extend $\Bzj$ to $s_{0}$ with $\Bzj(s_{0}):=e^{i\int\limits_{0}^{s_{0}} b(s) ds}= \tilde b_{0}$. 

Since $\dee_T h = \frac{dh}{ds}$, any solution $h$ to \eqref{ode3} necessarily satisfies
\begin{equation}\label{ode2}
    -ih'(s)+b(s)h(s)=r(s), \ \text{for a.e.}\ s\in (0, s_{0}).
\end{equation} 
This is a  first order linear ordinary differential equation. Using the method of  integrating factors, we have 
\begin{equation}\label{ode}
     \frac{d}{ds} \left(\Bzj(s) h(s)\right)= i \Bzj(s) r(s), \ \text{for a.e.}\ s\in (0, s_{0}).
\end{equation}
 Thus any solution to \eqref{ode2} is of the form
\begin{equation}\label{so}
\begin{split}
        h(s) =&i\tilde b^{-1}_{z}(s)\int\limits_0^s  \tilde b_{z}(t)r(t)dt+ C\tilde b^{-1}_{z}(s)\ \text{for a.e.}\ s\in (0, s_{0}) 
        \end{split}
\end{equation}
for some constant $C$. Since $ \tilde b_{z}r\in L^p((0, s_0))$, one has   $ \int\limits_0^s  \tilde b_{z}(t)r(t)dt\in  W^{1, p}((0, s_0))\subset L^\infty((0, s_0))$. Together with the fact that $\Bzj^{-1}\in W^{1, p}((0, s_0)) $, one can further  verify that $h   \in W^{1, p}((0, s_0))$. In particular, by the Sobolev embedding theorem, every solution   to \eqref{ode2}, and hence to \eqref{ode3},  is continuous on $(0, s_0)$. Note that due to  the closedness of $\bndry\domain$,  a continuous solution to \eqref{ode2} becomes  a solution to \eqref{ode3} if and only if the following  natural  boundary value condition  is imposed  to   \eqref{ode2}:
\begin{equation}\label{rb}
    h(0)=h(s_{0}).
\end{equation}
Making use of the facts that  $\Bzj(0) = 1$ and $\Bzj(s_{0}) = \tilde b_{0}$,  \eqref{rb} is further equivalent to
\begin{equation}\label{rb2}
    i+C = i \tilde b_{0}\int\limits_0^{s_0}  \tilde b_{z}(t)r(t)dt + C\tilde b_{0}.
\end{equation} 
Due to \eqref{co},  the constant $C$ in \eqref{so} is uniquely determined by \eqref{rb2}. As a consequence, there exists a unique solution  to   \eqref{ode2} with boundary value condition \eqref{rb}, and thus a unique solution to    \eqref{ode3}.

 Next, we verify that $h=\mathcal T_br$ in \eqref{E:auxOp} solves \eqref{ode3} explicitly. Integrate   \eqref{ode} from $0$ to $s_{0}$ to get
$$ \Bzj(s_{0})h(s_{0}) -  \Bzj(0)h(0) =\int\limits_{0}^{s_{0} }i \Bzj(s)r(s)ds=i\int\limits_{\bndry\domain} \Bzj(\zeta)r(\zeta)d\sigma(\zeta).  $$
Making use of the facts that $h(s_{0}) = h(0)=h(z)$, $\Bzj(0) = 1$ and $\Bzj(s_{0}) = \tilde b_{0}$ again, we see that
$ \mathcal T_br$  solves \eqref{ode3}.

Finally we show the desired estimate for $\mathcal T_b$. The estimate   $ \sup_{\zeta\in \bndry\domain}|{ \mathcal T_br}(\zeta)|$ is done in \eqref{sup}. 
For the estimate of $\partial_T(\mathcal T_br)$, we make use of  the fact that  $\mathcal T_b r$ satisfies \eqref{ode3} and \eqref{sup} to get
\begin{equation}\label{bdd}
\begin{split}
        \|\partial_T(\mathcal T_br)\|_{L^p(\bndry\domain, \sigma)}\le & \|b\mathcal T_br\|_{L^p(\bndry\domain, \sigma)} + \|r\|_{L^p(\bndry\domain, \sigma)}\\
        \lesssim& \|b\|_{L^p(\bndry\domain, \sigma)}\sup_{\zeta\in \bndry\domain}|\mathcal T_b r(\zeta)| + \|r\|_{L^p(\bndry\domain, \sigma)}\\
        \lesssim& \|b\|_{L^p(\bndry\domain, \sigma)}\|r\|_{L^p(\bndry\domain, \sigma)} + \|r\|_{L^p(\bndry\domain, \sigma)}\lesssim \|r\|_{L^p(\bndry\domain, \sigma)}.
        \end{split}
\end{equation}
Conclusion {\tt (ii.)} is thus proved.

 \vskip0.2in
Proof of {\tt (iii.)}: We only need to prove that $\mathcal T_b \reu^{p}(\bndry\domain)\subset h^{1,p}(\bndry\domain)$. Given $r\in \reu^{p}(\bndry\domain)$,  by definition of $\reu^{p}(\bndry\domain)$  there exists some $h\in h^{1,p}(\bndry\domain)$ such that $h$ satisfies   \eqref{ode3}. Due to the uniqueness of solutions to   \eqref{ode3} as proved in part {\tt (ii.)}, $\mathcal T_b r$ must be equal to $ h$ and thus $ \mathcal T_b r \in h^{1,p}(\bndry\domain)$. 
\end{proof}

\subsection{Proof of Theorem \ref{T:RDbar}} 
Let $p\geq 1$ and $r\in L^p(\bndry\domain, \sigma)$.
We first show that the holomorphic Robin problem \eqref{RdbarIntro}
 is solvable if and only if $r\in\reu^{p}(\bndry\domain)$. To see this, suppose that $G$ is a solution to \eqref{RdbarIntro} with datum $r$: then it is immediate from \eqref{RdbarIntro} and \eqref{D:H1q}
 that  $G\in \Hs^{1, p}(\domain)$, hence $\dot{G}\in h^{1,p}(\bndry\domain)$. Moreover \eqref{RdbarIntro}, \eqref{E:normal-deriv-hol} and \eqref{E:tan-der} give that
 \begin{equation}\label{ode1}
  -i\partial_{T}\dot{G} (\zeta) + b(\zeta)\dot{G}(\zeta) = r(\zeta)\ \  \text{for}\ \ \sigma-\text{a.e. } \zeta \in \bndry\domain\, ,
\end{equation}
proving that $r\in\reu^{p}(\bndry\domain)$, see \eqref{D:Rdata}. 
Conversely, if $r\in\reu^{p}(\bndry\domain)$ then it satisfies \eqref{ode1} for some $G\in \Hs^{1, p}(\domain)$, see \eqref{D:Rdata} and \eqref{D:h1q}, and it follows from \eqref{ode1},  \eqref{E:normal-deriv-hol}, \eqref{E:tan-der},  \eqref{D:H1q} and \eqref{D:Hp} that $G$ solves \eqref{RdbarIntro} with datum $r$. 
\vskip0.1in
 
Next, we observe that if $G\in \Hs^{1,p}(\domain)$ is a solution of \eqref{RdbarIntro} with datum $r\in \reu^p(\bndry\domain)$ then $G$ is also a solution of the regularity problem \eqref{E:D-Dbar-reg} with datum
$$f(\zeta):= \mathcal T_b\, r(\zeta)$$
(which is in $h^{1,p}(\bndry\domain)$ by Proposition \ref{P:Top} {\tt (iii.)}).   
 Hence the uniqueness and the representation formula: $G = (\mathbf C_D\circ \mathcal T_b) r$ follow from Theorem \ref{T:D-Dbar-reg}.  

We are left to prove the  estimate  \eqref{E:Rqbound} for $1<p<\infty$. Since   $\mathcal T_br\in C^{1-\frac{1}{p}}(\bndry\domain)$ by Proposition \ref{P:Top} (i.),    one has 
$G := (\mathbf C_D \circ \mathcal T_b) r \in C^{1-\frac{1}{p}}(\overline\domain)\cap \vartheta (D)$ 
with 
$$\|G\|_{ C^{1-\frac{1}{p}}(\overline\domain) }\lesssim \|{\mathcal T_b r}\|_{C^{1-\frac{1}{p}}(\bndry\domain)}. $$
In particular,  
  by the above and \eqref{db}  
$$\|G^*\|_{L^\infty(\bndry\domain, \sigma)  }\le  \sup_{z\in\domain}|G|  \lesssim \ \|{\mathcal T_b r}\|_{C^{1-\frac{1}{p}}(\bndry\domain )}\, \lesssim \, \|r\|_{L^p(\bndry\domain, \sigma) }. $$ 
Moreover, since $(G')^* \in L^p(\bndry\domain, \sigma)$ by assumption, we have $G'\in \Hs^p(\domain)$. Thus by  Lemma \ref{LaplacianLemma}, \eqref{E:tan-der} and \eqref{bdd}  \begin{equation*}
    \begin{split}
       \|(G')^*\|_{L^p(\bndry\domain, \sigma)}\lesssim& \| \dot{(G')}\|_{L^p(\bndry\domain, \sigma)}  =\|\dee_T {\mathcal T_b r}\|_{L^p(\bndry\domain, \sigma)} 
       \lesssim  \|r\|_{L^p(\bndry\domain, \sigma)}.
    \end{split}
\end{equation*}
\qed

\begin{rem}

As an immediate consequence of Theorem \ref{T:RDbar},  the data space for the holomorphic Robin problem can be characterized equivalently as  $$\reu^p_b(\bndry\domain) =\left\{ r\in L^p(\bndry\domain, \sigma):   {\mathcal T_b r}\in  h^{1,p}(\bndry\domain)\right\},$$
where ${\mathcal T_b r}$ is given by   \eqref{E:auxOp}. On the other hand, from the proof of Proposition \ref{P:Top}, given $r\in \reu^p_b(\bndry\domain)$, the unique solution $G$ also lies in $ C^{1-\frac{1}{p}}(\overline\domain)$.
\end{rem}

\fontsize{11}{11}\selectfont

\vspace{0.7cm}

\noindent williamgryc@muhlenberg.edu,

\vspace{0.2 cm}

\noindent Department of Mathematics and Computer Science, Muhlenberg College, Allentown, PA, 18104, USA.\\

\noindent loredana.lanzani@gmail.com, 

\vspace{0.2 cm}

\noindent Department of Mathematics, Syracuse University, Syracuse, NY, 13244, USA.

\vspace{0.2 cm}

\noindent Department of Mathematics, University of Bologna, Italy.

\noindent \\

\noindent jue.xiong@colorado.edu,

\vspace{0.2 cm}

\noindent Department of Mathematics, University of Colorado, Boulder, CO, 80309, USA.
\noindent \\

\noindent zhangyu@pfw.edu,

\vspace{0.2 cm}

\noindent Department of Mathematical Sciences, Purdue University Fort Wayne, Fort Wayne, IN 46805-1499, USA.\\


\begin{thebibliography}{10}
  \bibitem{Ah}{\sc L. Ahlfors}: {\em Complex Analysis.} McGraw Hill, New York, 1979.

  \bibitem{Aron}{\sc Aronszajn, N.}: {\em Theory of reproducing kernels.} Trans. Amer. Math. Soc. 67 (1950), 227--404.

  \bibitem{AxlShi}{\sc Axler S. and Shields A.L.}: {\em Univalent multipliers of the Dirichlet space.} Michigan Math. J., 32(1)1985, 65--80.

\bibitem{Bell}{\sc Bell, S.}: {\em The Cauchy transform, potential theory, and conformal mapping.} Second edition. Chapman \& Hall/CRC, Boca Raton, FL, 2016. xii+209 pp.

\bibitem{Begher}{\sc Begher, H.}: {\em Boundary value problems in complex analysis. I.} Bol. Asoc. Mat. Venez. 12, (2005), no.1, 65--85.

\bibitem{BurkholderGundy}{\sc Burkholder, D. and Gundy, R.}: {\em Distribution function inequalities for the area integral}. Studia Math. 44, (1972), 527--544.

\bibitem{C}{\sc Calder\`on, A. P.}: {\em Cauchy integrals on Lipschitz curves and related operators.}
Proc. Nat. Acad. Sci. USA 74, no. 4 (1977) 1324--1327.

\bibitem{CMM}{\sc Coifman, R., McIntosh, A. and Meyer, Y.}: {\em L'int\'{e}grale de Cauchy d\'{e}finit un op\'{e}rateur born\'{e} sur $L^2$ pourles courbes Lipschitziennes.} Ann. of Math. 116 (1982), 361--387.


\bibitem{Dah}{\sc Dahlberg, B.}: {\em
On the Poisson integral for Lipschitz and $C^1$-domains.}
Studia Math. 66 (1979), no. 1, 13--24.

\bibitem{Dah2}{\sc Dahlberg, B.}:  {\em Weighted norm inequalities for the Lusin area integral and the non-tangential maximal functions for functions harmonic in a Lipschitz domain.} Studia Math. 67 (1980), no. 3, 297--314.
 
\bibitem{DaKe}{\sc Dahlberg B. and Kenig, C.}: {\em Hardy Spaces and the Neumann Problem in $L^p$ for Laplace's Eaquation in Lipschitz Domains.} Annals of Mathematics, Second Series, Vol. 125, No. 3 (May, 1987), pp. 437--465.

\bibitem{Duren}{\sc Duren, P.}: {\em Theory of $H^p$ Spaces.} Academic Press, New York, NY, 1970, xii+258 pp.

\bibitem{FeffermanStein}{\sc Fefferman C.  and Stein, E.}: {\em $H^p$ spaces of several variables}, Acta Math. 129, (1972), 137--193.

\bibitem{Grafakos}{\sc Grafakos L.}: {\em Classical Fourier Analysis, 3d Ed.} Springer, Berlin, 2014, xvii + 638 pp.

\bibitem{GLZ}{\sc Gryc, W., Lanzani, L., Xiong, J. and Zhang, Y.}: {\em New properties of holomorphic Sobolev-Hardy spaces}, arXiv:2402.05670 [math.CV].

\bibitem{JK}{\sc Jerison, D. and Kenig, C.}: {\em Hardy spaces, $A^\infty$, and singular integrals on chord-arc domains}, Math. Scand. 50 no. 2, (1982), 221--247.

\bibitem{JK2}{\sc Jerison, D. and Kenig, C.}: {\em Boundary behavior of harmonic functions in non-tangentially accessible domains}, Adv. Math. 46, (1982), 80--147.

\bibitem{Kenig-1}{\sc Kenig, C.}: {\em Weighted $H^p$-spaces on Lipschitz domains}, American J. Math. 102 no. 1, (1980), 129 - 163.

\bibitem{Kenig-2}{\sc Kenig, C.}: {\em Harmonic analysis techniques for second order elliptic boundary value problems.}
 CBMS Regional Conf. Series in Math., no. 83, Amer. Math.
Soc., Providence, RI, 1994, xii + 146 pp.

 
\bibitem{L1}{\sc Lanzani, L.}: {\em  Cauchy transform and Hardy spaces for rough planar domains.} Analysis, geometry, number theory: the mathematics of Leon Ehrenpreis (Philadelphia, PA, 1998), 409--428, Contemp. Math., 251, Amer. Math. Soc., Providence, RI, 2000.


\bibitem{LanSh}{\sc Lanzani, L. and Shen, Z.}: {\em
On the Robin boundary condition for Laplace's equation in Lipschitz domains.}
Comm. Partial Differential Equations 29 (2004), no. 1-2, 91--109.

\bibitem{Mclean}{\sc Mclean, W.}:  {\em H\"older estimates for the Cauchy integral on a Lipschitz contour.} J. Integral Equations Appl. 1 (1988), no. 3, 435--451.

 

\bibitem{Mor}{\sc  Morrey, C. B.}: Multiple Integrals in the Calculus of Variations,  Classics in Mathematics. Springer-Verlag, Berlin, 2008. x+506 pp.

 

\bibitem{Necas}{\sc Ne\v{c}as, J.}: {\em Direct Methods in the Theory of Elliptic Equations.} Springer-Verlag, Berlin Heidelberg, 2012, xvi + 372pp.

\bibitem{Pom}{\sc Pommerenke, C.}: {\em Boundary Behaviour of Conformal Maps.} Springer-Verlag, Berlin, 1992, ix + 300 pp.

 

\bibitem{R1}{\sc Rudin, W.}: {\em Analytic functions of class $H_p$}, in Lectures on functions of a complex variable, U. of Michigan Press, Ann Arbor, 1955, pp. 387--397.

 

\bibitem{V}{\sc Verchota, G.}: {\em Layer potentials and regularity for the Dirichlet problem for Laplace's equation in Lipschitz domains.} J. Funct. Anal. \textbf{59} (1984), no. 3, 572--611.

\end{thebibliography}
\end{document}